\newtheorem{theorem}{Theorem}
\newtheorem{lemma}{Lemma}[section]
\newtheorem{assumption}{Assumption}
\newtheorem{proposition}{Proposition}
\theoremstyle{definition}
\newtheorem{definition}{Definition}
\theoremstyle{remark}
\newtheorem{remark}{Remark}
\DeclareMathOperator*{\argmin}{arg\,min}
\newcommand{\N}{\mathbb{N}}
\newcommand{\R}[1]{\mathbb{R}^{#1}}
\newcommand{\RR}[2]{\mathbb{R}^{#1\times #2}}
\newcommand{\Lp}{\mathcal{L}_\rho}
\newcommand{\one}{\textbf{1}_{\mathcal{X}}}
\newcommand{\fl}{f_{\lambda}}
\newcommand{\Ll}{L_{\lambda}}
\newcommand{\as}{\alpha_S}
\newcommand{\bs}{\beta_S}
\newcommand{\zero}{\textbf{0}}
\newcommand{\Q}{\mathcal{Q}}
\newcommand{\X}{\mathcal{X}}
\newcommand{\Y}{\mathcal{Y}}
\newcommand{\Pp}{\mathcal{P}^*}
\newcommand{\D}{\mathcal{D}^*}
\newcommand{\F}{\mathcal{F}}
\newcommand{\s}{\mathcal{S}_\tau}
\newcommand{\A}{\mathcal{A}}
\newcommand{\K}{\mathcal{K}}
\newcommand{\I}{\mathcal{I}}
\newcommand{\B}{\mathcal{B}}
\newcommand{\C}{\mathcal{C}}
\newcommand{\T}{\mathcal{T}}
\newcommand{\QQ}{\textbf{Q}}
\newcommand{\XX}{\textbf{X}}
\newcommand{\YY}{\textbf{Y}}
\newcommand{\WW}{\textbf{W}}
\newcommand{\TT}{\textbf{T}}
\newcommand{\MM}{\textbf{M}}
\newcommand{\UU}{\textbf{U}}
\newcommand{\VV}{\textbf{V}}
\newcommand{\CC}{\textbf{C}}
\newcommand{\Ss}{\textbf{S}}
\newcommand{\x}{\textbf{x}}
\newcommand{\y}{\textbf{y}}
\newcommand{\q}{\textbf{q}}
\newcommand{\w}{\textbf{w}}
\newcommand{\z}{\textbf{z}}
\newcommand{\p}{\textbf{p}}
\newcommand{\n}{\textbf{n}}
\newcommand{\uu}{\textbf{u}}
\newcommand{\vv}{\textbf{v}}
\newcommand{\bb}{\textbf{b}}
\newcommand{\cc}{\textbf{c}}
\newcommand{\Rx}{\mathcal{R}_{\mathcal{X}}}
\newcommand{\Ry}{\mathcal{R}_{\Y}}
\newcommand{\Rq}{\mathcal{R}_{\Q}}
\newcommand{\E}{\mathbb{E}}
\newcommand{\V}{\mathbb{V}}
\newcommand{\cv}[2]{\begin{bmatrix}#1\\#2\\\end{bmatrix}}
\title{Faster Projection-Free Augmented Lagrangian Methods via Weak Proximal Oracle}
\author{Dan Garber\footnote{Authors are ordered alphabetically.}\\ {\small \texttt{dangar@technion.ac.il}}
\and
Tsur Livney\footnotemark[\value{footnote}] \\  {\small \texttt{tsur.livney@campus.technion.ac.il}}
\and
Shoham Sabach\footnotemark[\value{footnote}]  \\ {\small \texttt{ssabach@technion.ac.il}} \\ \\
Technion - Israel Institute of Technology
}
\date{}
\begin{document}





\maketitle

\begin{abstract}
    This paper considers a convex composite optimization problem with affine constraints, which includes problems that take the form of minimizing a smooth convex objective function over the intersection of (simple) convex sets, or regularized with multiple (simple) functions. Motivated by high-dimensional applications in which exact projection/proximal computations are not tractable, we propose a \textit{projection-free} augmented Lagrangian-based method, in which primal updates are carried out using a \textit{weak proximal oracle} (WPO). In an earlier work, WPO was shown to be more powerful than the standard \textit{linear minimization oracle} (LMO) that underlies conditional gradient-based methods (aka Frank-Wolfe methods). Moreover, WPO is computationally tractable for many high-dimensional problems of interest, including those motivated by recovery of low-rank matrices and tensors, and optimization over polytopes which admit efficient LMOs. The main result of this paper shows that under a certain curvature assumption (which is weaker than strong convexity), our WPO-based algorithm achieves an ergodic rate of convergence of $O(1/T)$ for both the objective residual and  feasibility gap. This result, to the best of our knowledge, improves upon the $O(1/\sqrt{T})$ rate for existing LMO-based projection-free methods for this class of problems. Empirical experiments on a  low-rank and sparse covariance matrix estimation task and the Max Cut semidefinite relaxation demonstrate that of our method can outperform  state-of-the-art LMO-based Lagrangian-based methods.
\end{abstract}

\section{Introduction}
Throughout the paper, we consider the following minimization problem
\begin{equation}\label{pblm:problem}\tag{OP}
        \min_{\x\in\E_1,\y\in\E_2}f(\x)+\Rx(\x)+\Ry(\y) ~~\textrm{s.t.} ~~ \A\x=\y,
\end{equation}
where $\E_1$ and $\E_2$ are finite Euclidean spaces, $f:\E_1\rightarrow\mathbb{R}$ is a convex and $\beta$-smooth function, $\Rx:\E_1\rightarrow(-\infty,\infty]$ and $\Ry:\E_2\rightarrow(-\infty,\infty]$ are proper, lower semi-continuous and convex functions, and $\A:\E_1\rightarrow\E_2$ is a linear mapping.

Problems that fall into the model \eqref{pblm:problem} appear in many interesting and important active research areas such as machine learning, signal processing, statistics, and more. For example, the recovery of a matrix or a tensor which is both sparse and of low rank is useful in problems such as covariance matrix estimation \cite{andrews1991heteroskedasticity,driscoll1998consistent,richard2012estimation}, graph and image denoising \cite{buades2005non,buades2005review,zhang2017beyond,zhang2020image} and link prediction \cite{liben2003link,lu2011link,zhang2017link}. More applications of Problem \eqref{pblm:problem} can be found in brain mapping \cite{ogawa1992intrinsic,lancaster2000automated,gramfort2013identifying} and multiple sequence alignment \cite{corpet1988multiple,chenna2003multiple,katoh2008recent,yen2016convex}.

An important family of efficient methods for solving problems in the form of model \eqref{pblm:problem} are Lagrangian-based methods, and most notably augmented Lagrangian methods \cite{mizoguchi1960kj,hestenes1969multiplier,powell1969method}. Starting with the classical proximal method of multipliers \cite{rockafellar1976augmented}, and until more recently \cite{sabach2019lagrangian,de2022constrained,dhingra2018proximal, chambolle2016ergodic}, such methods, which are based on proximal/projection computations (due to the nonsmooth functions $\Rx$ and $\Ry$ in Problem \eqref{pblm:problem}) have been successfully developed and corresponding provable convergence rates have been established. However, in many cases of interest such proximal/projection computations are not tractable in high-dimensional problems, for instance when either $\Rx$ or $\Ry$ is  a nuclear norm regularizer for matrices, which underlies many recovery problems of low-rank matrices and tensors (see, for instance, \cite{candes2009exact, candes2011robust, gandy2011tensor}),  or an indicator function for a polytope. Thus, with the growing interest in recent years in so-called \textit{projection-free}  methods, which are mostly based on the use of linear minimization oracles (LMO) instead of proximal/projection oracles through the Frank-Wolfe method (aka conditional gradient, see for instance \cite{jaggi2013revisiting}), and are often much more efficient to implement for high-dimensional problems (e.g., in case $\Rx$ or $\Ry$ is a nuclear norm regularizer or an indicator function for a polytope which captures some well-studied combinatorial structure, see for instance \cite{jaggi2013revisiting, HazanK12}), have been studied \cite{liu2019nonergodic,yurtsever2019conditional,silveti2020generalized}. However, these methods suffer from slow convergence rates compared to their proximal/projection-based counterparts, with the worst-case guaranteed convergence rate being at best $O(1/\sqrt{T})$, where $T$ is the number of iterations executed. This rate is not known to be improvable even under additional standard curvature assumptions such as strong convexity of the function $f(\cdot)$ in Problem \eqref{pblm:problem}\footnote{This is not surprising since it is well-known that in general, and as opposed to projection/proximal-based methods, the Frank-Wolfe method does not benefit from strong convexity, see for instance discussions in \cite{garber2016linearly, garber2016faster, allen2017linear}.}.
A recent attempt to obtain faster projection-free methods under relatively mild assumptions has been made in \cite{gidel2018frank}, however as we discuss in detail in  the appendix (see Section \ref{sec:issue}), there is a major problem with their proof which does not seem easily fixable. We also refer the interested reader to the excellent discussions in  \cite{gidel2018frank} on major issues with other previous attempts to prove faster rates for projection-free methods.

For the simpler problem of minimizing a smooth convex objective function over a convex and compact set, and in particular in case the feasible set is either a polytope or a nuclear norm ball of matrices or a spectrahedron (set of positive semidefinite matrices with unit trace), several recent works showed how simple modifications of the Frank-Wolfe method can lead to provably faster convergence rates, under standard curvature assumptions, see for instance \cite{garber2016linearly, lacoste2015global, beck2017linearly, garber2016faster, allen2017linear, garber2019fast}. Thus, in the context of the significantly more complex Problem \eqref{pblm:problem}, our work considers the following natural question:
\begin{center}
\emph{Can we design a projection-free augmented Lagrangian-based method  that, at least under standard curvature assumptions, improves upon the current $O(1/\sqrt{T})$ convergence rate?}
\end{center}
We answer this question on the affirmative side by providing a projection-free method with a rate of $O(1/T)$, both in terms of the objective function residual and the feasibility gap of the affine constraint in Problem \eqref{pblm:problem}. 

Our approach departs from previous projection-free methods which guarantee only a rate of $O(1/\sqrt{T})$ in two aspects. First, as already suggested, we make a curvature assumption on Problem \eqref{pblm:problem}: we introduce a curvature condition we call \textit{primal quadratic gap} (see definition in the sequel). In particular, this condition holds whenever the smooth function $f(\cdot)$ in Problem \eqref{pblm:problem} is strongly convex, but also holds in case $f(\cdot)$ is a composition of a strongly convex function with a linear transformation (e.g., a least squares objective, which need not be strongly convex) and $\Rx,\Ry$ are indicators for polytopes. Second, while previous projection-free methods rely on the availability of a linear minimization oracle (LMO), in this work we consider a slightly stronger oracle which was already considered in recent works \cite{allen2017linear, garber2018fast, garber2019fast} (these however do not apply to problems such as Problem \eqref{pblm:problem}, which includes affine constraints), namely the \textit{weak proximal oracle} (WPO)\footnote{The term ``weak proximal oracle'' was originally coined in \cite{garber2019fast}.}. In a nutshell, this oracle solves a certain relaxed version of the proximal/projection problem, which can still be much more efficient to solve than the standard proximal/projection problem, but can provide more informative directions than that of the LMO. Two prime examples for the efficiency of implementing the WPO are when (i) $\Rx$ or $\Ry$ is an indicator function for a polytope which admits and efficient LMO, then the WPO could be implemented based on a single call to the LMO of the polytope, and (ii) $\Rx$ or $\Ry$ is an indicator function/regularizer corresponding to the matrix nuclear norm and a unique low-rank optimal solution exists, then implementing the WPO corresponds to a low-rank SVD computation with rank matching that of the low-rank optimal solution, which is much more efficient than proximal/projection computation, which generally requires a full-rank SVD (see a detailed discussion in \cite{allen2017linear, garber2019fast, garber2018fast}).

The combination of the two ingredients: a curvature condition and the weak proximal oracle, to obtain faster convergence rates for projection-free methods should not come as a surprise since it was already instrumental in achieving similar improvements for projection-free methods in settings that do not include affine constraints as in model \eqref{pblm:problem}, see for instance \cite{garber2016linearly, lacoste2015global, garber2016faster, allen2017linear, garber2019fast}\footnote{While technically \cite{garber2016linearly, lacoste2015global} rely on the use of a standard LMO for the feasible set (which they assume to be a polytope), as we show in the sequel, the way they use the output of the LMO to construct the new descent direction is very similar to the implementation of a WPO. In particular, we rely on observations from \cite{garber2016linearly} to construct an efficient WPO for polytopes.}. To the best of our knowledge, this is the first time such an approach is used for a problem of the form of model \eqref{pblm:problem}. 

\begin{table}[H]\renewcommand{\arraystretch}{1.3}
{\footnotesize
    \centering
        \begin{tabular}{ |p{3.8cm}|p{1.2cm}|p{1.2cm}|p{2.3cm}|p{2.1cm}|p{2.5cm}| }
            \hline
            Algorithm & Oracle & Rate & Oracle implementation in polytope setup & Oracle implementation in nuclear norm setup  &  Assumptions\\[3ex] 
            \hline
            Proximal Method of Multipliers \cite{rockafellar1976augmented,sabach2019lagrangian} & proximal & $O(1/T)$ & projection & full-rank SVD & \\[3ex] 
            \hline
            Accelerated Primal Dual  \cite{chambolle2016ergodic} & proximal & $O(1/T^2)$ & projection & full-rank SVD & strong convexity \\[3ex]
            \hline
            Conditional Gradient Augmented Lagrangian \cite{yurtsever2019conditional} & LMO & $O(1/\sqrt{T})$ & LMO & rank-one SVD & \\[3ex] 
            \hline
            This work & weak proximal & $O(1/T)$ & LMO plus convex quadratic opt. over simplex & rank$(\x^*)$-SVD & primal quadratic gap (weaker than strong convexity) \\[3ex] 
            \hline
        \end{tabular}
    \caption{Comparison of augmented Lagrangian-based methods with different optimization oracles for Problem \eqref{pblm:problem}. The ``Rate'' column specifies the convergence rate only in terms of the number of iterations $T$ and suppresses all other quantities. ``Polytope setup'' in the forth column refers to a setting in which $\Rx$ is an indicator function of some compact and convex polytope and $\Ry$ is some proximal friendly function. ``Nuclear norm setup'' in the fifth column refers to a setting in which $\Rx$ is a matrix nuclear norm regularizer and $\Ry$ is some proximal friendly function. $\x^*$ denotes the optimal solution, which is assumed to be unique. Both columns specify the dominating cost of implementing the appropriate optimization oracle for the $\x$ variable.}
    \label{tab:Lagrangian Methods Comparison}}
\end{table}\renewcommand{\arraystretch}{1}

\subsection{Paper organization}
In Section \ref{sec:Preliminaries}, we discuss the augmented Lagrangian approach for solving the Problem \eqref{pblm:problem}, present the Primal Quadratic Gap property (PQG) needed for our algorithm's analysis. We also recall the notion of  the Weak Proximal Oracle that will be used in our algorithm and discuss its implementation in several important scenarios. In the Appendix (see Section \ref{sec:Illustrative Examples}), we give examples of problems of interest, for which our algorithm might be appealing to use. In Section \ref{sec:Algorithm & Analysis}, we develop our algorithm and prove our main rate of convergence result. In Section \ref{sec:Numerical Experiments}, we demonstrate the empirical performance of our algorithm.

\section{Preliminaries}\label{sec:Preliminaries}
\subsection{Notation}
Throughout the paper, we will use the following notation for simplifying the presentation and developments. We use the following compact notations $\q:=\cv{\x}{\y}\equiv(\x,\y)\in\E_1\times\E_2=:\E$, and $\Rq(\q):=\Rx(\x)+\Ry(\y)$. In addition, we define the linear mapping $\K:\E\rightarrow\E_2:=[\A,-\I]$, where $\I$ is an identity linear mapping. This way we can compactly write the constraint $\A\x=\y$ as $\K\q=\textbf{0}$. For any finite Euclidean space $\V$ of dimension $n$, we denote the standard Euclidean inner product of any two points $\x:=[x_1,\dots,x_n]^{\top}$ and $\y:=[y_1,\dots,y_n]^{\top}\in\V$, by $\langle \x,\y\rangle:=\sum_{i=1}^nx_i\cdot y_i$, and we let $\|\x\|:=\sqrt{\langle \x,\x\rangle}$ and $\|\x\|_1:=\sum_{i=1}^n|x_i|$ denote the standard Euclidean norm and the $\ell_1$ norm, respectively. For any two finite Euclidean spaces $\V_1$ and $\V_2$, the \emph{spectral norm} of the linear mapping $\T:\V_1\to\V_2$, is denoted by $\|\T\|:=\max_{\x\in\V_1}\left\{ \|\T\x\| : \, \|\x\| = 1 \right\}$. While both norms are denoted the same, throughout the paper it will be clear from the context which of the norms is used in each appearance. In addition, $\|\XX\|_F$ denotes the Frobenius norm of a matrix $\XX$, $\|\XX\|_{\textrm{nuc}}$ denotes its nuclear norm and $\textrm{tr}(\XX)$ denotes its trace. We denote by $\mathbb{S}^d_+$ the set of all positive semidefinite matrices of size $d\times d$ and by $\s$ the spectrahedron of all matrices in $\mathbb{S}^d_+$ with trace $\tau$. We use the notation $\delta_C(\cdot)$ to denote the indicator function of a set $C$.

\subsection{The Augmented Lagrangian}

The \emph{augmented Lagrangian} (AL) of Problem \eqref{pblm:problem} with a multiplier (dual variable) $\w\in\E_2$ is defined as
\begin{align}
        \Lp(\q,\w) &\equiv\Lp(\x,\y,\w) \nonumber
        :=f(\x)+\Rq(\q)+\langle\w,\K\q\rangle+\frac{\rho}{2}\|\K\q\|^2, \label{eq:Augmented Lagrangian}
\end{align}
 where $\rho>0$ is a penalty parameter associated to the linear equality constraint. Note that the (standard) Lagrangian of Problem \eqref{pblm:problem} is recovered when $\rho = 0$.
\\

We also require the two following standard assumptions which we assume to hold true throughout the paper.

\begin{assumption}\label{asmp:saddle}
    The augmented Lagrangian has a saddle point, i.e., there exists a point $(\q^*,\w^*)\in\E\times\E_2$ satisfying
    \begin{equation}\label{eq:saddle point}
        \Lp(\q^*,\w)\leq\Lp(\q^*,\w^*)\leq\Lp(\q,\w^*), 
    \end{equation}
    for all $\q\in\E$ and $\w\in\E_2$.
\end{assumption}

\begin{assumption}\label{asmp:slater}(Slater's Condition)
    There exist $\x\in ri(dom(f)\cap dom(\Rx))$ and $\y\in ri(dom(\Ry))$ such that $\A\x=\y$, where $ri$ denotes the relative interior of a set.
\end{assumption}
We denote by $\Pp$ the set of optimal solutions of the primal problem \eqref{pblm:problem}, and by $\D$ the set of optimal solutions of the associated dual problem.

Under the above two assumptions, and thanks to the convexity of Problem \eqref{pblm:problem}, strong duality holds. As a result, and thanks to Proposition \ref{prp:saddle points set}, the set of saddle points of $\Lp$ is non-empty and corresponds to the set of \emph{all} pairs $(\q^*,\w^*)$, where $\q^*\in\Pp$ and $\w\in\D$ (see also Appendix \ref{sec:pqg for polytopes}).

In order to find solutions of Problem \eqref{pblm:problem}, we will solve the following equivalent saddle point problem
\begin{equation}\label{pblm:reformulation}
    \min_{\q\in\E}\max_{\w\in\E_2}\Lp(\q,\w),
\end{equation}
whose optimal solutions are the saddle points of $\Lp$.

From now on, we will denote the optimal objective function value by $\Lp(\q^*,\w^*)$, for all saddle points $(\q^*,\w^*)$, and for short we will write $\Lp^*$.

The \emph{smooth} part of the \emph{augmented Lagrangian} (SAL) of Problem \eqref{pblm:problem} is defined, for any $\q\in\E$ and $\w\in\E_2$, by
\begin{equation}\label{eq:smooth part}
    S(\q,\w)\equiv S(\x,\y,\w):=f(\x)+\langle\w,\K\q\rangle+\frac{\rho}{2}\|\K\q\|^2.
\end{equation}
The following result will be essential to our developments in the sequel. Its simple proof is deferred to the appendix.
\begin{lemma}\label{lemma:loose upper bound for K}
     The function $\q \rightarrow S(\q,\w)$, for any fixed $\w \in \E_{2}$, is smooth with parameter $\bs=\beta+\rho(\|\A\|+~1)^2$.
\end{lemma}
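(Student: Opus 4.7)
The plan is to exploit the additive structure of $S(\q,\w)$ as a function of $\q$ for fixed $\w$. Writing $S(\q,\w)=f(\x)+\langle\w,\K\q\rangle+\frac{\rho}{2}\|\K\q\|^2$, I would handle the three summands separately and add their smoothness parameters.

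First, I would show that $\q\mapsto f(\x)$, viewed as a function on $\E=\E_1\times\E_2$, retains its $\beta$-smoothness. This is immediate: writing $\q_i=(\x_i,\y_i)$ for $i=1,2$, the gradient with respect to $\q$ is $(\nabla f(\x),\zero)\in\E$, so
\[
\|(\nabla f(\x_1),\zero)-(\nabla f(\x_2),\zero)\|=\|\nabla f(\x_1)-\nabla f(\x_2)\|\le\beta\|\x_1-\x_2\|\le\beta\|\q_1-\q_2\|.
\]
The linear term $\q\mapsto\langle\w,\K\q\rangle$ has a constant gradient $\K^{*}\w$, so it contributes $0$ to the smoothness parameter. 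The quadratic term $\q\mapsto\frac{\rho}{2}\|\K\q\|^2$ has gradient $\rho\K^{*}\K\q$, whose Lipschitz constant is $\rho\|\K^{*}\K\|=\rho\|\K\|^2$.

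Hence the main (and only) remaining step is to bound the spectral norm $\|\K\|$. Since $\K\q=\A\x-\y$, the triangle inequality gives
\[
\|\K\q\|=\|\A\x-\y\|\le\|\A\|\,\|\x\|+\|\y\|\le(\|\A\|+1)\max\{\|\x\|,\|\y\|\}\le(\|\A\|+1)\|\q\|,
\]
where the last inequality uses $\|\x\|\le\|\q\|$ and $\|\y\|\le\|\q\|$. Taking the supremum over unit vectors yields $\|\K\|\le\|\A\|+1$, so $\rho\|\K\|^2\le\rho(\|\A\|+1)^2$.

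Putting it all together, the sum of Lipschitz constants of the three gradient components is at most $\beta+\rho(\|\A\|+1)^2=\bs$, as claimed. The only potentially tricky point is the bound on $\|\K\|$; note that the sharper Cauchy--Schwarz estimate would give $\|\K\|^2\le\|\A\|^2+1$, but the statement intentionally uses the looser $(\|\A\|+1)^2$ (which is why the lemma is named ``loose upper bound''), so the crude triangle-inequality bound suffices.
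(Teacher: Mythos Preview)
Your proof is correct and follows essentially the same approach as the paper: decompose $S(\cdot,\w)$ into the $\beta$-smooth term $f$, the affine term (contributing $0$), and the quadratic term $\frac{\rho}{2}\|\K\q\|^2$ (contributing $\rho\|\K\|^2$), then bound $\|\K\|\le\|\A\|+1$ via the triangle inequality. The paper's version is slightly terser (it simply asserts the $(\beta+\rho\|\K\|^2)$-smoothness as ``obvious'' and then bounds $\|\K\|$ by relaxing the constraint $\|\x\|^2+\|\y\|^2=1$ to $\|\x\|\le1$, $\|\y\|\le1$), but the substance is identical.
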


\subsubsection{Primal Quadratic Gap}
We now study a new property of the smooth function $\q \rightarrow S(\cdot,\w)$, for any fixed $\w \in \E_{2}$, which we refer to as the \emph{Primal Quadratic Gap} (PQG).
\begin{definition}(Primal Quadratic Gap)\label{def:primal quadratic gap}
    We say that Problem \eqref{pblm:problem} satisfies the \emph{Primal Quadratic Gap} property with a parameter $\as>0$, if for any $\q\in \textrm{dom}(\Rq)$, the point $\q^*:=\argmin_{\q^*\in\Pp}\|\q-\q^*\|^2\in\Pp$ satisfies, for all $\w\in\E_2$, the following inequality 
    \begin{equation}\label{eq:pqg}
    \langle\q^*-\q,\nabla_{\q}S(\q,\w)\rangle\leq S(\q^*,\w)-S(\q,\w)-\frac{\as}{2}\|\q^*-\q\|^2.
    \end{equation}
\end{definition}
This property is a weaker version of the strong convexity property. Here, instead of assuming the inequality \eqref{eq:pqg} holds for any two points, we only require that it holds for any point $\q\in dom(\Rq)$ and the corresponding closest optimal solution of Problem \eqref{pblm:problem}.

\paragraph*{Example 1.} If the function $f(\x)$ is strongly convex, then the function $\q \rightarrow S(\q,\w)$, for a fixed $\w \in\E_{2}$, is strongly convex with a certain parameter $\as$. This implies that in this case the SAL $S(\q,\w)$ satisfies the primal quadratic gap property, with the same parameter $\as$. 
\begin{theorem}\label{thm:first case}
    Suppose that $f : \E_1 \rightarrow \R{}$ is $\alpha$-strongly convex.  Then, Problem \eqref{pblm:problem} admits a unique primal optimal solution $\q^*$ and it satisfies the PQG property with the parameter $\as=\min\{\frac{\alpha}{2},\frac{\alpha\rho}{\alpha+2\rho\|\A\|^2}\}>0$. In particular, the function $\q \rightarrow S(\q,\w)$, for any fixed $\w\in\E_{2}$, is strongly convex.
\end{theorem}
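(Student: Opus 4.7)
The plan is to establish the result in three stages: uniqueness of $\q^*$, joint strong convexity of $\q \mapsto S(\q,\w)$ with the stated constant, and finally the PQG property as a direct consequence.

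First, for uniqueness, I would argue that any two primal optima $(\x_1,\y_1),(\x_2,\y_2) \in \Pp$ must satisfy $\x_1 = \x_2$: since $f$ is $\alpha$-strongly convex and $\Rx,\Ry$ are convex, the objective restricted to the feasible set is strictly convex in $\x$, so the standard midpoint argument combined with feasibility of $(\frac{\x_1+\x_2}{2},\frac{\y_1+\y_2}{2})$ forces $\x_1 = \x_2$. The linear constraint $\A\x = \y$ then determines $\y^* = \A\x^*$ uniquely.

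The main step is to prove that $\q \mapsto S(\q,\w)$ is $\as$-strongly convex with the claimed $\as$. Writing $\delta\x := \x - \x'$ and $\delta\y := \y - \y'$ and using $\alpha$-strong convexity of $f$ together with convexity of the quadratic penalty $\tfrac{\rho}{2}\|\K\q\|^2$ (noting that the cross and linear terms in $\langle \w,\K\q\rangle$ and the penalty cancel up to the Bregman residual of the quadratic), I would obtain
\begin{equation*}
S(\q,\w) - S(\q',\w) - \langle \nabla_\q S(\q',\w), \q-\q' \rangle \;\geq\; \tfrac{\alpha}{2}\|\delta\x\|^2 + \tfrac{\rho}{2}\|\A\delta\x - \delta\y\|^2.
\end{equation*}
The hard step is to show that the right-hand side dominates $\tfrac{\as}{2}(\|\delta\x\|^2 + \|\delta\y\|^2)$ with $\as = \min\{\alpha/2,\alpha\rho/(\alpha+2\rho\|\A\|^2)\}$. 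I would split $\alpha\|\delta\x\|^2 = \tfrac{\alpha}{2}\|\delta\x\|^2 + \tfrac{\alpha}{2}\|\delta\x\|^2$, using the first half to cover the $\|\delta\x\|^2$ term (which requires $\as \leq \alpha/2$), and then apply Young's inequality $\|\delta\y\|^2 \leq (1+t)\|\A\delta\x - \delta\y\|^2 + (1 + 1/t)\|\A\|^2\|\delta\x\|^2$ to bound $\as\|\delta\y\|^2$ by $\tfrac{\alpha}{2}\|\delta\x\|^2 + \rho\|\A\delta\x - \delta\y\|^2$. Matching coefficients yields $\as(1+t) \leq \rho$ and $\as(1+1/t)\|\A\|^2 \leq \alpha/2$; optimizing with $t = 2\rho\|\A\|^2/\alpha$ gives exactly the second candidate $\alpha\rho/(\alpha+2\rho\|\A\|^2)$. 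Taking the minimum of the two constraints yields the claimed $\as$.

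Finally, the PQG property follows directly: for any $\q \in \mathrm{dom}(\Rq)$, uniqueness means $\q^* = \argmin_{\q'\in\Pp}\|\q-\q'\|^2$ is simply the unique primal optimum. Applying the $\as$-strong convexity inequality just proved with the role of $(\q,\q')$ taken as $(\q^*,\q)$ gives
\begin{equation*}
S(\q^*,\w) \;\geq\; S(\q,\w) + \langle \nabla_\q S(\q,\w),\, \q^*-\q \rangle + \tfrac{\as}{2}\|\q^*-\q\|^2,
\end{equation*}
which rearranges precisely to the PQG inequality \eqref{eq:pqg}. The main obstacle is the constant-matching in step two; everything else is a direct invocation of strong convexity and feasibility.
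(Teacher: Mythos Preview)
Your proposal is correct and follows essentially the same route as the paper: both obtain the Bregman lower bound $\tfrac{\alpha}{2}\|\delta\x\|^2 + \tfrac{\rho}{2}\|\K\delta\q\|^2$ and then apply a Young/Peter--Paul inequality with a one-parameter family to extract a multiple of $\|\delta\q\|^2$; your parameter $t=2\rho\|\A\|^2/\alpha$ corresponds exactly to the paper's choice $\tilde{s}=1+\alpha/(2\rho\|\A\|^2)$ via $t=1/(\tilde{s}-1)$. The only cosmetic difference is that the paper lower-bounds $\|\K\delta\q\|^2$ directly (without pre-splitting $\alpha$), whereas you split $\alpha$ in half and upper-bound $\|\delta\y\|^2$---these are two sides of the same inequality, and you additionally spell out the uniqueness argument that the paper leaves implicit.
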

The proof is deferred to the Appendix (see Section \ref{sec:properties proofs}).

\paragraph*{Example 2.} If $\Rq$ is an indicator function for a polytope, we can show that the PQG property holds true even when $f(\cdot)$ need not be strongly convex. Alternatively, we will make the following assumption.
\begin{assumption}\label{asmp:polytope assumption}
   \begin{itemize}
   	\item[(i)] $f \equiv g \circ \B$, where $\B : \E_1 \rightarrow \E_3$ is a linear mapping, and $g : \E_3 \rightarrow \R{}$ is $\alpha_g$-strongly convex.
    \item[(ii)] $\Rq(\q):=\Rx(\x)+\Ry(\y)$ is an indicator of a compact and convex polytope $\F\equiv\{\q\in\E:\C\q\leq \bb\}$, where $\C:\E\to\R{p}$ is a linear mapping, and $\bb\in\R{p}$.
	\end{itemize}
\end{assumption}

\begin{theorem}\label{thm:second case}
    Suppose that Assumption \ref{asmp:polytope assumption} holds true. Then, there exists a constant $\sigma >0$ such that if $\rho\geq\alpha_g$, then Problem \eqref{pblm:problem} satisfies the PQG property with the parameter
    $\as=\alpha_g\sigma^{-1}$.
\end{theorem}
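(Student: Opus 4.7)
The plan is to combine three ingredients: strong convexity of $g$, convexity of the augmented quadratic term, and a Hoffman-type error bound for the optimal set $\Pp$ viewed as a polyhedron. The target inequality \eqref{eq:pqg} can be rewritten as a lower bound on the Bregman gap
$\Delta(\q,\q^*,\w) := S(\q^*,\w)-S(\q,\w)-\langle \nabla_{\q}S(\q,\w),\q^*-\q\rangle \geq \tfrac{\as}{2}\|\q-\q^*\|^2$,
so the whole proof reduces to lower bounding $\Delta$ by a multiple of $\|\q-\q^*\|^2$.

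First, I would split $S(\q,\w) = g(\B\x)+\langle\w,\K\q\rangle+\tfrac{\rho}{2}\|\K\q\|^2$ and evaluate $\Delta$ term by term. The linear term $\langle\w,\K\q\rangle$ contributes zero to the Bregman gap; the convex quadratic term contributes exactly $\tfrac{\rho}{2}\|\K(\q^*-\q)\|^2$; and $\alpha_g$-strong convexity of $g$ together with the chain rule yields $g(\B\x^*)-g(\B\x)-\langle \nabla g(\B\x),\B(\x^*-\x)\rangle \geq \tfrac{\alpha_g}{2}\|\B(\x-\x^*)\|^2$. Using $\q^*\in\Pp$ (so $\K\q^*=\zero$ by feasibility) we get $\|\K(\q^*-\q)\|^2=\|\K\q\|^2$, leading to $\Delta \geq \tfrac{\alpha_g}{2}\|\B(\x-\x^*)\|^2 + \tfrac{\rho}{2}\|\K\q\|^2$. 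Under the hypothesis $\rho\geq\alpha_g$ this further simplifies to $\Delta \geq \tfrac{\alpha_g}{2}\bigl(\|\B(\x-\x^*)\|^2+\|\K\q\|^2\bigr)$.

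Next, I would establish that $\Pp$ is a nonempty polyhedron and apply Hoffman's Lemma. Strong convexity of $g$ forces every primal optimum to share the common value $\xi^*:=\B\x^*$, because averaging any two optima with different $\B\x$-values would strictly decrease $g\circ \B$ by strong convexity, contradicting optimality. Combined with the polytope description $\F=\{\q:\C\q\leq\bb\}$ and the affine constraint, this gives $\Pp=\F\cap\{\q:\A\x=\y\}\cap\{\q:\B\x=\xi^*\}$, a polyhedral set whose inequality description involves only the problem data $(\A,\B,\C,\bb,\xi^*)$. Hoffman's Lemma then supplies a constant $\sigma>0$, depending only on $(\A,\B,\C)$, such that for every $\q\in\F$,
\[ \|\q-\q^*\|^2 = \dist(\q,\Pp)^2 \leq \sigma\bigl(\|\A\x-\y\|^2+\|\B\x-\xi^*\|^2\bigr) = \sigma\bigl(\|\K\q\|^2+\|\B(\x-\x^*)\|^2\bigr),\]
since $\q^*\in\Pp$ means $\B\x^*=\xi^*$ and $\q$ already satisfies the inequality constraints $\C\q\leq\bb$. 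Plugging this into the lower bound for $\Delta$ yields $\Delta \geq \tfrac{\alpha_g}{2\sigma}\|\q-\q^*\|^2$, so the PQG property holds with $\as=\alpha_g\,\sigma^{-1}$.

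The main obstacle will be making the Hoffman step fully rigorous: one has to justify that $\xi^*$ is an intrinsic quantity of the problem (not depending on the choice of $\q^*$), that $\Pp$ is nonempty (this is guaranteed by Assumption \ref{asmp:saddle}), and most importantly that the Hoffman constant $\sigma$ can be chosen uniformly in $\q\in\F$, depending only on the matrices $(\A,\B,\C)$. A secondary point is that the Bregman computation requires $\q\in\textrm{dom}(\Rq)$, which holds because Definition \ref{def:primal quadratic gap} restricts attention to $\q\in\textrm{dom}(\Rq)=\F$; the closest-point $\q^*$ is then automatically in $\F\supseteq\Pp$, so the quantity $\|\B(\x-\x^*)\|$ is well defined and the Hoffman argument applies to the polyhedron $\Pp$ sitting inside $\F$.
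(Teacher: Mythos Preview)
Your proposal is correct and follows essentially the same route as the paper: decompose the Bregman residual of $S(\cdot,\w)$ into the $g$-part and the quadratic penalty part, use $\alpha_g$-strong convexity of $g$ and $\K\q^*=\zero$, then invoke a Hoffman error bound after identifying $\Pp$ with the polyhedron $\F\cap\{\K\q=\zero\}\cap\{\B\x=\xi^*\}$. The only point to tighten is the reverse inclusion $\F\cap\{\K\q=\zero\}\cap\{\B\x=\xi^*\}\subseteq\Pp$ (your argument only gives $\subseteq$ in the other direction); the paper handles this by noting that any such $\q$ is feasible with objective value $g(\xi^*)=g(\B\x^*)$, hence optimal.
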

The proof is deferred to the  Appendix (see Section \ref{sec:properties proofs}).

\subsection{Weak Proximal Oracle}\label{sec:weak oracle section}
In this section, we present the main ingredient of our algorithm, which is used to update the primal variable $\q$ of Problem \eqref{pblm:reformulation} --- the weak proximal oracle, a concept which we adapt from \cite{garber2019fast} to our augmented Lagrangian framework.

To this end, we will need to define the following function. Given two points $\q\in\E$ and $\w\in\E_2$ together with two scalars $\eta\in(0,1]$ and $\mu>0$, we define the function
\begin{equation*}
    \begin{split}
        \Phi_\lambda(\vv)&:=\Rq(\vv)+\langle \vv,\nabla_\q S(\q,\w)+2\mu\K^{\top}\K\q\rangle+ \frac{\lambda}{2}\left(\eta(\bs+2\mu\|\K\|^2)\right)\|\vv-\q\|^2,
    \end{split}
\end{equation*}
where $\bs$ is the smoothness parameter of $S(\cdot,\w)$ (independent of $\w$), and $\lambda \geq 1$ is a parameter.

Before we formally define the notion of weak proximal oracle, we would like to define the notion of Strong Proximal Oracle. We say that a procedure is a \emph{(Strong) Proximal Oracle} applied to the augmented Lagrangian $\Lp(\q,\w)\equiv S(\q,\w)+\Rq(\q)$, which is associated with Problem \eqref{pblm:problem}, if it computes the exact minimizer of $\Phi_{\lambda}$ for $\lambda = 1$. That is, solves the problem $\min_{\vv\in\E}\Phi_1(\vv)$.
\begin{definition}\label{def:wpo def}(Weak Proximal Oracle)    
    We say that a procedure, which is denoted by WPO$_{\lambda}(\q,\w,\eta,\mu)$, is a \emph{$\lambda$-Weak Proximal Oracle} applied to $\Lp(\q,\w)$, if it returns a point $\vv\in\E$ which satisfies that
    \begin{equation}\label{eq:algorithm's wpo}
        \forall \q^*\in\Pp: \quad   \Phi_1(\vv)\leq \Phi_\lambda(\q^*),
    \end{equation}
     where we recall that $\Pp$ is the set of optimal solutions of Problem \eqref{pblm:problem}.
\end{definition}

Recalling that $\q$ is simply a convenient notation for the concatenation of the original two vector variables $\x$ and $\y$, the implementation of an oracle whose output satisfies \eqref{eq:algorithm's wpo} is naturally achieved by decoupling the condition \eqref{eq:algorithm's wpo} into two parts, one w.r.t. the variable $\x$ and the other w.r.t. the variable $\y$. That is, we consider two separate computations of two points, $\vv_\x\in\E_1$ and $\vv_\y\in\E_2$, satisfying the following inequalities  with some $\lambda_\x,\lambda_\y\geq1$:
\begin{equation}\label{eq:x wpo}
    \forall \x^*\in\X^*:\quad \Phi_1^\x(\vv_\x)\leq \Phi_{\lambda_\x}^\x(\x^*),
\end{equation}
where $\X^*:=\{\x^*\in\E_1:(\x^*,\A\x^*)\in\Pp\}$, and
\begin{equation}\label{eq:Jx def}
    \begin{split}
        \Phi_{\lambda_\x}^\x&:=\Rx(\vv_\x)+\langle \vv_\x,\nabla_\x S(\q,\w)+2\mu\A^{\top}\K\q\rangle+ \lambda_{\x}\frac{\eta(\bs+2\mu\|\K\|^2)}{2}\|\vv_\x-\x\|^2.
    \end{split}
\end{equation}
Similarly, 
\begin{equation}\label{eq:y wpo}
    \forall \y^*\in\Y^*:\quad  \Phi_1^\y(\vv_\y)\leq \Phi_{\lambda_\y}^\y(\y^*),
\end{equation}
where $\Y^*:=\{\A\x^*\in\E_2:\x^*\in\X^*\}$, and
\begin{equation}\label{eq:Jy def}
    \begin{split}
        \Phi_{\lambda_\y}^\y(\vv_\y)&:=\Ry(\vv_\y)+\langle \vv_\y,\nabla_\y S(\q,\w)-2\mu\K\q\rangle+ \lambda_{\y}\frac{\eta(\bs+2\mu\|\K\|^2)}{2}\|\vv_\y-\y\|^2.
    \end{split}
\end{equation}
The following proposition is a simple observation.
\begin{proposition}
    Assume that $\vv_\x\in\E_1$ satisfies \eqref{eq:x wpo} with some parameter $\lambda_\x\geq1$, and $\vv_\y\in\E_2$ satisfies \eqref{eq:y wpo} with some parameter $\lambda_\y\geq1$. Then,  $\vv=(\vv_\x,\vv_\y)$ satisfies \eqref{eq:algorithm's wpo} with $\lambda=\max\{\lambda_\x,\lambda_\y\}\geq1$.   
\end{proposition}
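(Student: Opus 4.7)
The plan is to verify that the objective $\Phi_\lambda$ and the per-variable objectives $\Phi_{\lambda_\x}^\x,\Phi_{\lambda_\y}^\y$ decouple cleanly across the two blocks $\x$ and $\y$, so that the two block inequalities assumed in the hypothesis can simply be summed to obtain the joint inequality \eqref{eq:algorithm's wpo}.

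First I would unpack $\Phi_\lambda(\vv)$ term by term using $\vv=(\vv_\x,\vv_\y)$ and the definition $\Rq(\vv)=\Rx(\vv_\x)+\Ry(\vv_\y)$. For the inner-product term, since $\K=[\A,-\I]$, the adjoint gives $\K^{\top}\K\q=\bigl(\A^{\top}\K\q,\,-\K\q\bigr)$, and $\nabla_\q S(\q,\w)=(\nabla_\x S(\q,\w),\nabla_\y S(\q,\w))$, so the linear term splits as $\langle \vv_\x,\nabla_\x S(\q,\w)+2\mu\A^{\top}\K\q\rangle+\langle \vv_\y,\nabla_\y S(\q,\w)-2\mu\K\q\rangle$. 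For the quadratic term, $\|\vv-\q\|^2=\|\vv_\x-\x\|^2+\|\vv_\y-\y\|^2$. Comparing with \eqref{eq:Jx def} and \eqref{eq:Jy def}, this yields the identity $\Phi_\lambda(\vv)=\Phi_\lambda^\x(\vv_\x)+\Phi_\lambda^\y(\vv_\y)$ for any $\lambda\geq 1$, and in particular $\Phi_1(\vv)=\Phi_1^\x(\vv_\x)+\Phi_1^\y(\vv_\y)$.

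Next, for a given $\q^*=(\x^*,\y^*)\in\Pp$, the definitions of $\X^*$ and $\Y^*$ give $\x^*\in\X^*$ and $\y^*=\A\x^*\in\Y^*$, so the hypotheses \eqref{eq:x wpo} and \eqref{eq:y wpo} apply and yield
\[
\Phi_1^\x(\vv_\x)\leq \Phi_{\lambda_\x}^\x(\x^*),\qquad \Phi_1^\y(\vv_\y)\leq \Phi_{\lambda_\y}^\y(\y^*).
\]
Since the only $\lambda$-dependent pieces of $\Phi_{\lambda_\x}^\x,\Phi_{\lambda_\y}^\y$ are the nonnegative quadratic terms with coefficient scaling linearly in $\lambda$, and since $\lambda=\max\{\lambda_\x,\lambda_\y\}\geq\lambda_\x,\lambda_\y$, monotonicity gives $\Phi_{\lambda_\x}^\x(\x^*)\leq \Phi_\lambda^\x(\x^*)$ and $\Phi_{\lambda_\y}^\y(\y^*)\leq \Phi_\lambda^\y(\y^*)$. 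Summing the two block inequalities and using the decomposition established above,
\[
\Phi_1(\vv)=\Phi_1^\x(\vv_\x)+\Phi_1^\y(\vv_\y)\leq \Phi_\lambda^\x(\x^*)+\Phi_\lambda^\y(\y^*)=\Phi_\lambda(\q^*),
\]
which is exactly \eqref{eq:algorithm's wpo}. Since $\q^*\in\Pp$ was arbitrary, the claim follows.

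There is no real obstacle here; the proof is a direct bookkeeping exercise. The only point requiring a moment of care is the correct decomposition of $\K^{\top}\K\q$ into its $\x$- and $\y$-blocks, which is why I would verify that step first. Once the separability of $\Phi_\lambda$ is established, the rest is summation and monotonicity in $\lambda$.
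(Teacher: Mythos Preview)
Your proof is correct and is exactly the natural argument; the paper itself does not give a proof, merely calling the proposition ``a simple observation,'' and your separability-plus-monotonicity derivation is precisely what that observation amounts to.
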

The main difficulty in satisfying \eqref{eq:x wpo} and \eqref{eq:y wpo} lies in the nonsmooth functions $\Rx$ and $\Ry$, respectively. This motivates the following definition.
\begin{definition}(Weak Proximal Friendly)\label{def:WPF}
    We say that a convex function $\Rx$ ($\Ry$) is \emph{weak proximal friendly} for Problem \eqref{pblm:problem}, if a point $\vv_\x\in\E_1$ ($\vv_\y\in\E_2$) satisfying \eqref{eq:x wpo} (\eqref{eq:y wpo}) (for some finite $\lambda_{\x}$ ($\lambda_{\y}$)) can be computed efficiently. We say that Problem \eqref{pblm:problem} is \emph{weak proximal friendly} if $\Rx$ and $\Ry$ are both weak proximal friendly.
\end{definition}
Let us now discuss some of the most important and interesting examples of weak proximal friendly functions.
\subsubsection{Proximal friendly functions}\label{sec:proximal friendly regularization}
When $\Rx$ $(\Ry)$ is a (strong) proximal friendly function (i.e., an exact minimizer of $\Phi^{\x}_1$ ($\Phi^{\y}_1$), as defined in \eqref{eq:Jx def} (\eqref{eq:Jy def}) could be computed efficiently), it follows immediately that it is also a weak proximal friendly function with parameter $\lambda_{\x}=1$ ($\lambda_{\y}=1$).

\subsubsection{Matrix nuclear norm regularization/constraint}\label{sec:matrices}    
In a typical low-rank matrix recovery setup, in which the nuclear norm is used as a convex surrogate for low-rank (see, for instance, the seminal works \cite{candes2012exact, candes2011robust}), we have that $\E_1 = \mathbb{R}^{m\times n}$ and $\Rx$ is an indicator function of a nuclear norm ball or a nuclear norm regularizer, or an indicator function of the spectrahedron (the set of all positive semidefinite matrices with trace equals some fixed positive parameter),  in case the solution is also required to be positive semidefinite. Assuming there exists a unique optimal low-rank solution, i.e., $\X^* = \{\XX^*\}$ with $\textrm{rank}(\XX^*) = k << \min\{m,n\}$, then an oracle for \eqref{eq:x wpo} amounts to computing a single rank-$k$ SVD of an $m\times n$ matrix, plus additional computationally-cheaper operations, which in high dimension is far more efficient than a proximal/projection computation, which in general requires a full-rank SVD, see detailed discussions in  \cite{allen2017linear, garber2018fast, garber2019fast}. Concretely, to satisfy \eqref{eq:x wpo} in this case we solve:
\begin{equation}\label{eq:low rank prox}
    \argmin_{\VV\in\mathbb{R}^{m\times n}:~\textrm{rank}(\VV)\leq k}\Phi_1^{\x}(\VV),
\end{equation}
and the corresponding WPO parameter is $\lambda_\x=1$.

Note that Problem \eqref{eq:low rank prox} follows the same structure of the standard proximal computation w.r.t. the function $\Rx$, only that  it is further constrained over the set of bounded rank matrices (which makes the problem more efficient to solve). Formally, we have the following theorem (extracted from the relevant discussions in \cite{allen2017linear, garber2018fast, garber2019fast}).
\begin{theorem}\label{matrices implementation}
     Let $(\QQ,\WW)\in \E\times \E_2$ be a pair of primal and dual points, where $\QQ = (\XX, \YY)\in\mathbb{R}^{m\times n}\times\E_2$. Let $\hat{\beta}:=\bs+2\mu\|\K\|^2$. Denote $\MM:=\XX-\frac{1}{\eta\hat{\beta}}\left(\nabla_\XX S(\QQ,\WW)+2\mu\A^{\top}\K\QQ\right).$ Let $\UU,\boldsymbol\Sigma,\VV$ be the SVD matrices of $\MM$,  i.e., $\MM=\UU\boldsymbol\Sigma \VV^{\top}$.
     \begin{itemize}
         \item If $\Rx(\cdot)=\nu\|\cdot\|_{\textrm{nuc}}$, for some $\nu>0$, then a solution to \eqref{eq:low rank prox} is the matrix $\tilde{\MM}=\UU\Tilde{\boldsymbol\Sigma}\VV^{\top}$, for 
        \begin{equation*}
            \Tilde{\boldsymbol\Sigma}=\textrm{diag}\left(\max\{\sigma_1(\MM)-\zeta,0\},\dots,\max\{\sigma_k(\MM)-\zeta,0\},0,\dots,0\right),
        \end{equation*}
         where $\zeta := \frac{\nu}{\eta\hat{\beta}}$.
         \item If $\Rx(\cdot)=\delta_{NB(\tau)}(\cdot)$ is the indicator function for the nuclear norm ball of radius $\tau$, then a solution to \eqref{eq:low rank prox} is given by $\tilde{\MM}=\UU\Tilde{\boldsymbol\Sigma}\VV^{\top}$, where here $\Tilde{\boldsymbol\Sigma}$ is the diagonal matrix whose diagonal is the projection of the vector $(\sigma_1(\MM),\dots,\sigma_k(\MM),0,\dots,0)$ onto the $\ell_1$ norm ball of radius $\tau$.
         \item If $\Rx(\cdot)=\delta_{\s}(\cdot)$ is the indicator function for the spectahedron $\{\XX~|~\XX\succeq 0,~\textrm{tr}(\XX)=\tau\}$, for some given $\tau > 0$, then letting $\UU\Lambda \UU^{\top}$ be the eigen-decomposition of $\MM$, a solution to \eqref{eq:low rank prox} is the matrix $\tilde{\MM}=\UU\Tilde{\Lambda} \UU^{\top}$, where $\tilde{\Lambda}$ is the diagonal matrix whose diagonal is the projection of the vector $(\lambda_1(\MM),\dots,\lambda_k(\MM),0,\dots,0)$ onto the simplex of radius $\tau$.
     \end{itemize}
In all these three cases, the runtime to compute $\tilde{\MM}$ is dominated by the computation of the top $k$ components in the SVD of $\MM$.     
\end{theorem}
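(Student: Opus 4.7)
The plan is to reduce all three cases of Problem \eqref{eq:low rank prox} to a common rank-constrained proximal quadratic problem and then invoke a rearrangement-type argument. Setting $\g := \nabla_\XX S(\QQ,\WW) + 2\mu\A^{\top}\K\QQ$ and $\hat{\beta} := \bs + 2\mu\|\K\|^2$, completing the square in
$$\Phi_1^\x(\VV) = \Rx(\VV) + \langle \VV, \g\rangle + \frac{\eta\hat{\beta}}{2}\|\VV - \XX\|_F^2$$
yields, up to an additive constant independent of $\VV$, the expression $\Rx(\VV) + \frac{\eta\hat{\beta}}{2}\|\VV - \MM\|_F^2$ with $\MM$ as in the statement. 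Thus in all three cases I would solve
$$\min_{\textrm{rank}(\VV)\leq k}\ \Rx(\VV) + \frac{\eta\hat{\beta}}{2}\|\VV - \MM\|_F^2.$$

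Next, I would exploit the unitary invariance of the regularizers. Expanding $\|\VV-\MM\|_F^2 = \|\VV\|_F^2 - 2\langle \VV,\MM\rangle + \|\MM\|_F^2$, the first and third terms depend only on singular values (or eigenvalues, in the spectrahedron case), and so does $\Rx(\VV)$. For the cross term, I would invoke von Neumann's trace inequality $\langle \VV,\MM\rangle \le \sum_i \sigma_i(\VV)\sigma_i(\MM)$, with equality when $\VV$ shares a simultaneous SVD with $\MM$ and its singular values are arranged in matching order. Consequently, for any admissible singular-value profile on $\VV$ (with at most $k$ non-zeros), the objective is smallest when the left/right singular vectors of $\VV$ are taken from those of $\MM$. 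A standard rearrangement argument then forces those non-zero singular values to be paired with the top-$k$ singular values of $\MM$. For the spectrahedron case, where $\VV$ is symmetric PSD with fixed trace, I would replace von Neumann's inequality by its symmetric analogue (attributed to Theobald/Fan): $\langle \VV,\MM\rangle \le \sum_i \lambda_i(\VV)\lambda_i(\MM)$ for symmetric $\MM$, with equality when $\VV$ and $\MM$ are co-diagonalizable with eigenvalues ordered compatibly. This reduces the problem to an optimization over the vector of top-$k$ (singular or eigen-)values.

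In each of the three bullets, the reduced scalar problem has a classical closed-form solution. For the nuclear norm regularizer, the problem decouples into $k$ independent one-dimensional shrinkages whose optimizer is $s_i = \max(\sigma_i(\MM) - \zeta, 0)$ with $\zeta = \nu/(\eta\hat{\beta})$. For the nuclear norm ball, the reduced problem is Euclidean projection of the non-negative vector of top-$k$ singular values of $\MM$ (zero-padded) onto the $\ell_1$ ball of radius $\tau$. For the spectrahedron, it is Euclidean projection of the vector of top-$k$ eigenvalues of $\MM$ (zero-padded) onto the simplex of radius $\tau$. Substituting back into $\VV = \UU\Tilde{\boldsymbol\Sigma}\VV^{\top}$ (respectively $\UU\tilde\Lambda\UU^{\top}$) recovers the three displayed formulas.

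The main obstacle is the rearrangement step: rigorously justifying that among all rank-$k$ choices of singular/eigen-bases for $\VV$, aligning with the top-$k$ of $\MM$ (and pairing values in matching order) is globally optimal. The cleanest route is to first solve the reduced problem assuming an arbitrary $k$-subset $\{i_1,\ldots,i_k\}$ of singular vectors of $\MM$, substitute the scalar optimizers (shrinkage, $\ell_1$ projection, simplex projection), and verify that the resulting optimal value is monotonically non-increasing in the chosen $\{\sigma_{i_j}(\MM)\}$, so that the top-$k$ subset is best. Once this is established, the runtime claim follows: the only non-trivial cost in producing $\tilde{\MM}$ is the top-$k$ singular (or eigen-) decomposition of $\MM$, while the subsequent scalar operations are $O(k\log k)$.
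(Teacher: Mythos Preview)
Your approach is correct and is the standard route for these results. The paper itself does not supply a proof of this theorem; it simply states that the result is ``extracted from the relevant discussions in'' three cited references and moves on, so there is nothing to compare against within the paper.

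One minor simplification to your write-up: the ``rearrangement step'' you flag as the main obstacle is already handled by the equality condition in von Neumann's trace inequality. Since $\textrm{rank}(\VV)\leq k$, only $\sigma_1(\VV),\dots,\sigma_k(\VV)$ can be nonzero, and the inequality $\langle\VV,\MM\rangle\leq\sum_i\sigma_i(\VV)\sigma_i(\MM)$ (both sequences sorted in decreasing order) automatically pairs them with the top-$k$ singular values of $\MM$; equality is attained precisely when $\VV$ and $\MM$ admit a simultaneous ordered SVD. Thus no separate subset-selection or monotonicity argument is needed --- the reduction to the scalar problem over $(s_1,\dots,s_k)$ with $s_i\geq 0$ matched against $(\sigma_1(\MM),\dots,\sigma_k(\MM))$ is immediate. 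The same remark applies in the spectrahedron case via the Fan/Theobald inequality for symmetric matrices.
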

  
\subsubsection{Polytope constraint}\label{sec:polytopes}
In case $\Rx$ ($\Ry$) is an indicator function for a convex and compact polytope for which a linear minimization oracle can be implemented efficiently, then $\Rx$ ($\Ry$) is also weak proximal friendly. Concretely, it is possible to construct an oracle for \eqref{eq:x wpo} (\eqref{eq:y wpo}) using a single call to the linear minimization oracle, plus some additional computations (that do not require any oracle access).
\begin{theorem}\label{thm:polytope implementaion}
    Let $\Rx$ $(\Ry)$ be an indicator function for a convex and compact polytope $\F$. Suppose a point $\x\in\E_1=\R{d}$ $(\y\in\E_2:=\R{d})$ is given explicitly as a convex combination of $t$ vertices of the polytope $\{\z_1,\dots,\z_t\}$. Let us denote the new output of the LMO of $\F$ w.r.t. the linear objective function determined by the vector $\p_{\x}:=\nabla_\x S(\q,\w)+2\mu\A^{\top}\K\q$ ($\p_{\y}:=\nabla_\y S(\q,\w)-2\mu\K\q$) by $\z_{t+1}$ and let $\MM=[\z_1,\dots,\z_{t+1}]\in\RR{d}{(t+1)}$. Let $\hat{\beta}:=\bs+2\mu\|\K\|^2$. Then, we can compute a point satisfying \eqref{eq:x wpo} (\eqref{eq:y wpo}) with a parameter $\lambda(\F)\geq1$ by returning the point $\vv_\x=\MM\boldsymbol\gamma^*_\x$ ($\vv_\y=\MM\boldsymbol\gamma^*_\y$), where $\boldsymbol\gamma^*_\x$ ($\boldsymbol\gamma^*_\y$) is an optimal solution to the following convex quadratic problem over the simplex of size $t+1$:
    \begin{equation}\label{pblm:x simplex}
        \min_{\boldsymbol\gamma\geq0,\langle\textbf{1},\boldsymbol\gamma\rangle=1}\langle \MM\boldsymbol\gamma,\p_{\x}\rangle+\frac{\eta\hat{\beta}}{2}\|\MM\boldsymbol\gamma-\x\|^2
    \end{equation}
    \begin{equation}\label{pblm:y simplex}
        \left(\min_{\boldsymbol\gamma\geq0,\\\langle\textbf{1},\boldsymbol\gamma\rangle=1}\langle \MM\boldsymbol\gamma,\p_{\y}\rangle+\frac{\eta\hat{\beta}}{2}\|\MM\boldsymbol\gamma-\y\|^2\right),
    \end{equation}
    where $\textbf{1}\in\R{t+1}$ is the vector of ones.
    
    Note that the returned solution $\vv_\x=\MM\boldsymbol\gamma^*_\x$ ($\vv_\y=\MM\boldsymbol\gamma^*_\y$) is now given explicitly in the form of a convex combination of at most $t+1$ vertices of the polytope.
\end{theorem}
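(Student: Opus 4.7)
The plan is to exploit the fact that, by exactly solving \eqref{pblm:x simplex}, the output $\vv_\x=\MM\boldsymbol\gamma^*_\x$ is by construction the global minimizer of $\Phi_1^\x(\cdot)$ over the convex hull $\mathcal{C}:=\textrm{conv}\{\z_1,\ldots,\z_{t+1}\}\subseteq\F$ (here we use that $\Rx\equiv 0$ throughout $\F$, so the QP objective coincides with $\Phi_1^\x$ on $\mathcal{C}$). Hence, to establish \eqref{eq:x wpo} it suffices to exhibit a single comparator $\hat{\vv}\in\mathcal{C}$ such that $\Phi_1^\x(\hat{\vv})\leq \Phi_{\lambda(\F)}^\x(\x^*)$ for every $\x^*\in\X^*$: QP optimality would then yield $\Phi_1^\x(\vv_\x)\leq\Phi_1^\x(\hat{\vv})\leq\Phi_{\lambda(\F)}^\x(\x^*)$. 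The same reduction, applied to $\vv_\y$ and $\Phi_1^\y$, handles \eqref{eq:y wpo}.

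To construct $\hat{\vv}$, I would invoke the \emph{local linear optimization oracle} (LLOO) of Garber--Hazan \cite{garber2016linearly}, which is the geometric engine behind the polytope-dependent constant. For any convex and compact polytope $\F$ there is a constant $\mu(\F)\geq 1$ such that, given any $\x\in\F$ with an explicit decomposition $\x=\sum_{i=1}^t\gamma_i\z_i$, any direction $\p$, and any radius $r>0$, one can construct a point $\hat{\vv}\in\textrm{conv}\{\z_1,\ldots,\z_t,\tilde\z\}$, where $\tilde\z:=\argmin_{\z\in\F}\langle\z,\p\rangle$ is the LMO output, satisfying
\begin{equation*}
\|\hat{\vv}-\x\|\leq\mu(\F)\,r\qquad\text{and}\qquad\langle\hat{\vv},\p\rangle\leq\langle\z,\p\rangle\;\;\text{for all}\;\z\in\F\;\text{with}\;\|\z-\x\|\leq r.
\end{equation*}
Since $\tilde\z=\z_{t+1}$ in our setting, this immediately gives $\hat{\vv}\in\mathcal{C}$.

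With this tool at hand, the remainder is a short calculation. Fix any $\x^*\in\X^*\subseteq\F$ and instantiate the LLOO with $\p=\p_\x:=\nabla_\x S(\q,\w)+2\mu\A^{\top}\K\q$ and with radius $r=\|\x^*-\x\|$. The second property applied at $\z=\x^*$ (which lies in $\F\cap B(\x,r)$) gives $\langle\hat{\vv},\p_\x\rangle\leq\langle\x^*,\p_\x\rangle$, while the first gives $\|\hat{\vv}-\x\|^2\leq\mu(\F)^2\|\x^*-\x\|^2$. Using $\Rx(\hat{\vv})=0$,
\begin{equation*}
\Phi_1^\x(\hat{\vv})=\langle\hat{\vv},\p_\x\rangle+\tfrac{\eta\hat{\beta}}{2}\|\hat{\vv}-\x\|^2\leq\langle\x^*,\p_\x\rangle+\mu(\F)^2\cdot\tfrac{\eta\hat{\beta}}{2}\|\x^*-\x\|^2=\Phi_{\mu(\F)^2}^\x(\x^*),
\end{equation*}
and \eqref{eq:x wpo} holds with $\lambda(\F):=\mu(\F)^2\geq 1$. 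The $\y$-side argument, with $\p_\y:=\nabla_\y S(\q,\w)-2\mu\K\q$ in place of $\p_\x$, is verbatim the same.

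The main obstacle is the LLOO property itself: establishing the existence of $\mu(\F)$ together with the explicit combinatorial construction of $\hat{\vv}$ supported on $\{\z_1,\ldots,\z_t,\tilde\z\}$. This is a nontrivial geometric fact about polytopes --- closely tied to pyramidal width and Hoffman's inequality --- which I would import from \cite{garber2016linearly} rather than reprove. A secondary, bookkeeping-type subtlety is that the LLOO requires $\x$ to be available as an explicit convex combination of vertices; this is automatic in the algorithmic context, since each iterate is returned by a preceding instance of \eqref{pblm:x simplex} as a sparse convex combination of at most $t+1$ vertices.
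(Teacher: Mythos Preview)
Your proposal is correct and follows essentially the same approach as the paper's proof: both invoke the local linear optimization oracle property from \cite{garber2016linearly} (what you call $\mu(\F)$ is the paper's $\omega$), use that the QP output $\vv_\x$ minimizes $\Phi_1^\x$ over $\textrm{conv}\{\z_1,\dots,\z_{t+1}\}$, and combine the two LLOO guarantees to obtain \eqref{eq:x wpo} with $\lambda(\F)=\omega^2$. The only cosmetic difference is that the paper carries through a generic comparator $\uu\in\F$ before specializing to $\x^*\in\X^*$, whereas you fix $\x^*$ from the outset.
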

The proof  is based on several observations from \cite{garber2016linearly} and is given in the Appendix (see Section \ref{sec:polytope oracle proof}).

The fact that the implementation of the WPO described in Theorem \ref{thm:polytope implementaion} only increases the number of vertices in the support of the computed point $\vv_\x$ ($\vv_\y$) by at most one is important, since our algorithm for solving the saddle point problem \eqref{pblm:reformulation} using this WPO makes a single call to this oracle per iteration. Hence, when the overall number of iterations is not very large, we will have that on each iteration, the input point $\x$ ($\y$) to this oracle will be supported on only a few vertices of the polytope, which means that Problem \eqref{pblm:x simplex} (\eqref{pblm:y simplex}) could be solved very efficiently.

The WPO parameter $\lambda(\F)$ depends on the geometry of the polytope $\F$ and may depend in worst case on the dimension, more details can be found in the Appendix (see Section \ref{sec:polytope oracle proof}).


\section{Some Illustrative Examples}\label{sec:Illustrative Examples}
In this section, we describe several families of problems of interest for which our two main assumptions --- the primal quadratic gap property (Definition \ref{def:primal quadratic gap}) and the availability of an efficient weak proximal oracle (Definition \ref{def:wpo def}), hold true.

\subsection{Structured Low-Rank Matrix Recovery}\label{sec:examples:matrix}
Consider the following optimization problem in which the goal is to recover a structured low-rank matrix from some noisy matrix observation $\boldsymbol\Sigma$:
\begin{equation}\label{pblm:matrix problem}
        \min_{\Ss\in\mathbb{R}^{m\times n}} \frac{1}{2}\|\Ss-\boldsymbol\Sigma\|_F^2 + \nu\Vert{\Ss}\Vert_{\textrm{nuc}}+\Ry(\A(\Ss)),\\
\end{equation}
where $\nu > 0$, $\Ry$ is assumed to be a proximal friendly function (or even a weak proximal friendly function, recall Definition \ref{def:WPF}), and the \emph{unique} optimal solution (guaranteed from strong convexity of the squared Frobenius norm term) is \emph{low rank}.

For instance, when $\A:=\I$ is the identity mapping and $\Ry$ is an $\ell_1$ regularizer, Problem \eqref{pblm:matrix problem} is a natural convex relaxation for recovering a matrix that is both low-rank and sparse, see for instance \cite{richard2012estimation}.

Setting $f(\Ss):=\frac{1}{2}\|\Ss-\boldsymbol\Sigma\|_F^2$, and $\Rx:=\nu\Vert{\Ss}\Vert_{\textrm{nuc}}$ we get a problem of the form of model \eqref{pblm:problem}. Primal quadratic gap holds since $f$ is strongly convex (see Theorem \ref{thm:first case}). By the assumption of low-rank of the optimal solution, as well as its uniqueness, according to the discussion in Section \ref{sec:matrices}, $\Rx$ is indeed a weak proximal friendly function and in general, Problem \eqref{pblm:matrix problem} is weak proximal friendly.

\subsection{Low-Rank Tensor Recovery}
The previous example could be extended to the more general and important problem of recovering low-rank tesnors. Analogously to matrices, the rank of a $N$-way real tensor $\XX$, $N > 2$, could be defined as the minimum number of rank one tensors (of the same dimensions) whose sum equals the original tensor, and here we denote it by $\textrm{rank}(\XX)$. However, in general, even determining the rank of a given tensor is NP-Hard \cite{johan1990tensor}. An alternative is to use standard matrix rank in an appropriate manner. Let $\XX\in\R{n_1\times\dots\times n_N}$ be a $N$-way tensor. For any $i\in\{1,\dots,N\}$, let $\A_i:\R{n_1\times\dots\times n_N}\rightarrow\RR{n_i}{I_i}$,  $I_i:=\frac{1}{n_i}\prod_{j=1}^Nn_j$, be a linear mapping that flattens the tensor into a $\RR{n_i}{I_i}$ matrix, by flattening all dimensions except for the $i$th dimension, see exact definition in  \cite{kolda2009tensor, gandy2011tensor}. This leads to the definition of the $n$-rank of a $N$-way tensor $\XX$, which is given by $\textrm{rank}_n(\XX):=(\textrm{rank}(\A_1\XX),\dots,\textrm{rank}(\A_N\XX))\in\mathbb{N}^N$, where $\textrm{rank}(\A_i\XX)$ is the standard matrix rank of $\A_i\XX$.  It is a fairly simple observation that $\max_{i\in\{1,\dots,N\}}\textrm{rank}_n(\XX)(i) \leq \textrm{rank}(\XX)$. This leads to the following natural convex relaxation \cite{gandy2011tensor}, which is analogous to nuclear norm-based relaxations for the matrix case, for the problem of recovering a low-rank tensor from a given noisy tensor measurement $\TT$:
\begin{equation}\label{pblm:tensor problem}
    \begin{split}
        \min_{\XX\in\R{n_1\times\dots\times n_N}} &\frac{1}{2}\|\XX-{\TT}\|_2^2+\Rx(\XX)+\nu\sum_{i=1}^N\|\YY_i\|_{\textrm{nuc}}\\
         \textrm{s.t. }& \A_i\XX=\YY_i~\textrm{ }\forall i\in[N],
    \end{split}
\end{equation}
where $\Vert{\cdot}\Vert_2$ denotes the $\ell_2$ norm for the appropriate tensor space, $\nu >0$, and $\Rx$ is some (weak) proximal friendly function that may encode additional structure of the tensor to be recovered (e.g., sparsity if we take it to be an $\ell_1$ regularizer).

Using the notations of Problem \eqref{pblm:problem}, we will denote the following $f(\XX)=\frac{1}{2}\|\XX-{\TT}\|_2^2$, $\A\XX:=[\A_1\XX^{\top},\dots,\A_N\XX^{\top}]^{\top}$, $\YY=\YY_1\times\dots\times \YY_N\in\mathbb{R}^{n_1\times I_1}\times\cdots\times\mathbb{R}^{n_N\times{}I_N}$, and $\Ry(\YY):=\nu\sum_{i=1}^N\|\YY_i\|_{\textrm{nuc}}.$

As in the previous example, the primal quadratic gap property holds since $f$ is strongly convex and the optimal solution $(\XX^*, \YY^*)$ is unique. Now, let us assume that $\XX^*$ has a low $n$-rank, meaning $\textrm{rank}(\YY_i^*) =  \textrm{rank}(\A_i\XX^*) \leq k$, for a fairly small $k$. In this case, since the variable $\YY$ is given as a cartesian product, the weak proximal computation w.r.t. the variable $\YY$ naturally decouples into $N$ separate weak proximal computation w.r.t. each of the blocks $\YY_1,\dots,\YY_N$. It should be noted that each block enjoys the same structure as in the low-rank matrix case\footnote{\cite{gandy2011tensor} formally shows this decoupling for strong proximal computations. It then becomes trivial to apply this for weak proximal computations.}, i.e., amounts to a $k$-SVD computation of a real matrix (as described in Section \ref{sec:proximal friendly regularization}). Since additionally $\Rx$ is assumed to be a (weak) proximal friendly function, we have that in general, Problem \eqref{pblm:tensor problem} is weak proximal friendly.

\subsection{Least Squares Over Intersection of Polytopes}
Let $\F_1,\dots,\F_n$ be convex and compact polytopes in $\mathbb{R}^d$, for which a linear minimization oracle can be implemented efficiently. Given $\MM\in\RR{p}{d}$ and $\bb\in\R{p}$, we consider the following constrained least squares optimization problem:
\begin{equation}\label{pblm:least squares problem}
        \min_{\x\in\R{d}} \frac{1}{2}\|\MM \x-\bb\|^2 \quad
        \textrm{s.t.} \quad \x\in\bigcap_{i=1}^n\F_i.
\end{equation}

We can see that this problem is of the form of \eqref{pblm:problem} by setting $f(\x):=\frac{1}{2}\|\MM \x-\bb\|^2$, $\Rx(\x):=\delta_{\F_1}(\x)$, $\A:=[\I,\dots,\I]^\top$ ($n-1$ times), where $\I$ is the identity mapping, and $\Ry(\y)=\Ry([\y_1^{\top},\dots,\y_{n-1}^{\top}]^{\top}):=\sum_{i=1}^{n-1}\delta_{\F_{i+1}}(\y_i)$, where $\delta_{\F_i}(\cdot)$ is the indicator function of the polytope $\F_i$, $i=1,\dots,n$.

Notice we can write $f$ as $g\circ\B$, where $\B=\MM$, and $g(\cdot)=\frac{1}{2}\|(\cdot)-\bb\|^2$, which is a 1-strongly convex function. In addition, $\Rq$ here is an indicator function of the product of the polytopes $\F_1\times\cdots\times\F_n$. Hence, according to Theorem \ref{thm:second case}, by setting the augmented Lagrangian parameter to $\rho\geq1$, this problem satisfies the primal quadratic gap property.

Since we assumed each of the polytopes $\F_i$ admits an efficient linear minimization oracle, as discussed in Section \ref{sec:polytopes}, each of the indicator functions $\delta_{\F_i}$ is weak proximal friendly. Further more, since the variable $\y$ admits a simple cartesian product structure, the weak proximal oracle w.r.t. $\y$ naturally decouples into $n-1$ separate weak proximal oracle computations, each w.r.t. to one of the polytopes $\F_i$, $i=2,\dots,n$, and so, Problem \eqref{pblm:least squares problem} is also weak proximal friendly.

\section{Algorithm and Convergence Analysis}\label{sec:Algorithm & Analysis}
Our weak proximal oracle-based algorithm for solving the saddle point problem \eqref{pblm:reformulation} is given as Algorithm \ref{alg:Alg1}. At each iteration, the primal variable is updated using a Frank-Wolfe-style update, in the sense that the updated primal variable is given as a convex combination of the previous primal iterate and the output of some oracle. As opposed to the classical Frank-Wolfe method, which relies on the output of a linear minimization oracle, here we rely on the output of a weak proximal oracle described in the preceding sections. The update of the dual variable is done via a standard gradient ascent step.

\begin{algorithm}[H]
        \begin{enumerate}
            \item \textbf{Input:} Primal and dual step sizes $\eta\in[0,1], \mu>0$, a WPO oracle with parameter $\lambda\geq1$, and initialization $(\q_0,\w_0)\in \textrm{dom}(\Rq)\times\E_2$. 
            \item \textbf{Main step:} For $t = 0,1,...$ generate the sequence $\{(\q_t,\w_t)\}_{t\in\N}$ as follows
            \begin{align}
                \label{eq:WP}
                    &\vv_t = \textrm{WPO}_{\lambda}(\q_t,\w_t,\eta,\mu),\\
                \label{eq:primal step}
                    &\q_{t+1} = (1-\eta)\q_t+\eta\vv_t,\\
                \label{eq:dual step}
                    &\w_{t+1} = \w_t+\mu \K\q_{t+1}.
            \end{align}
        \end{enumerate}
    \caption{Weak Proximal Method of Multipliers}
    \label{alg:Alg1}
\end{algorithm}

Before stating our main result, the convergence guarantees of Algorithm \ref{alg:Alg1}, let us  introduce some helpful notation.
\begin{itemize}
    \item For any $\q=(\x,\y)\in\E$, we denote the value of the objective function of Problem \eqref{pblm:problem} by $h(\q):=f(\x)+\Rx(\x)+\Ry(\y)$.
    \item Given a sequence $\{\q_t\}_{t\in\N}$, we define the corresponding ergodic sequence $\{{\bar \q}_t\}_{t\in\N}$ by $\bar{\q}_t := \frac{1}{T}\sum_{j=0}^{t-1}\q_{j+1}$ for all $t \in \N$.
    \item For any $t\in\N$, we denote by $d_t:=\Lp(\q_t,\w_t)-\Lp^*$, where $\Lp^*$ denotes the AL value of a saddle point.
\end{itemize}
To formally present our main result, we also recall that $\alpha_{S}$ and $\beta_{S}$ denote the primal quadratic gap parameter and smoothness parameter of $S(\q,\w)$, respectively.
\begin{theorem}(An O(1/T) ergodic convergence rate)\label{thm:main}
    Let $(\q^*,\w^*)$ be a saddle point of $\Lp$ and let $\{(\q_t,\w_t)\}_{t\in\N}$ be a sequence generated by Algorithm \ref{alg:Alg1} with a primal step size $\eta=\frac{\as}{2\lambda(\bs+2\mu(\|\A\|+1)^2)}\in(0,1],$ and $\mu$ satisfying $0<\mu \leq \frac{\sqrt{\lambda\as^2+\lambda^2\bs^2}-\lambda\bs}{4\lambda(\|\A\|+1)^2}$. Then, for any $c\geq2\|\w^*\|$ and any integer $T\geq1$ we have
    \begin{align*}
     h(\bar{\q}_T)-h(\q^*)\leq\frac{B(\rho,\mu)}{T} ~~~ \textrm{and}~~~ \|\K\bar{\q}_T\|\leq\frac{2B(\rho,\mu)}{cT},
    \end{align*}
    where
    \begin{equation*}
        B(\rho,\mu) = \frac{(c+\|\w_0\|)^2}{2\mu}+\max\left\{0,\frac{2d_1(\beta+(\rho+2\mu)(\|\A\|+1)^2)}{\as}\right\},
    \end{equation*}
    $d_1=\Lp(\q_1,\w_1)-\Lp(\q^*,\w^*)$ and $\beta$ is the smoothness parameter of $f$.
\end{theorem}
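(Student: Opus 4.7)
The proof follows a primal--dual template in which the WPO guarantee replaces the strong proximal optimality used in standard augmented Lagrangian analyses.

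\emph{One-step primal inequality and descent in $d_t$.} I combine $\bs$-smoothness of $S(\cdot,\w_t)$ along $\q_{t+1}=(1-\eta)\q_t+\eta\vv_t$, convexity of $\Rq$ along the same segment, the WPO inequality $\Phi_1(\vv_t)\le\Phi_\lambda(\q^*)$ from Definition~\ref{def:wpo def}, and the PQG inequality from Definition~\ref{def:primal quadratic gap} applied to $\eta\langle\q^*-\q_t,\nabla_\q S(\q_t,\w_t)\rangle$. The two $\nabla_\q S$ contributions telescope to the single term $\eta\langle\q^*-\q_t,\nabla_\q S(\q_t,\w_t)\rangle$ thanks to the convex-combination structure. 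The stated step size is tuned so that the WPO quadratic slack $\tfrac{\lambda\eta^2(\bs+2\mu\|\K\|^2)}{2}\|\q_t-\q^*\|^2$ absorbs exactly half of the PQG gain, yielding
\[
\Lp(\q_{t+1},\w_t)\le (1-\eta)\Lp(\q_t,\w_t)+\eta\Lp^* -\tfrac{\eta\as}{4}\|\q_t-\q^*\|^2 + 2\mu\eta\langle \q^*-\vv_t,\K^\top \K\q_t\rangle - \eta^2\mu\|\K\|^2\|\vv_t-\q_t\|^2.
\]
Writing $\eta\K\vv_t=\K\q_{t+1}-(1-\eta)\K\q_t$ (and using $\K\q^*=\zero$) in the cross term, then applying the polarization identity to $2\langle\K\q_{t+1},\K\q_t\rangle$, the contribution $\mu\|\K\q_{t+1}-\K\q_t\|^2\le\eta^2\mu\|\K\|^2\|\vv_t-\q_t\|^2$ exactly cancels the last term above. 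What remains is
\[
\Lp(\q_{t+1},\w_t)-\Lp^*\le (1-\eta)(\Lp(\q_t,\w_t)-\Lp^*)-\tfrac{\eta\as}{4}\|\q_t-\q^*\|^2 + \mu(1-2\eta)\|\K\q_t\|^2 - \mu\|\K\q_{t+1}\|^2.
\]
The stated upper bound on $\mu$ is exactly the quadratic condition $16\lambda\mu^2(\|\A\|+1)^4+8\lambda\mu\bs(\|\A\|+1)^2\le\as^2$ that forces $\mu(1-2\eta)\|\K\|^2\le\tfrac{\eta\as}{4}$; combined with $\|\K\q_t\|^2\le\|\K\|^2\|\q_t-\q^*\|^2$, this makes the quadratic-in-$\|\q_t-\q^*\|$ portion nonpositive. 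Finally, $\mu\|\K\q_{t+1}\|^2=\Lp(\q_{t+1},\w_{t+1})-\Lp(\q_{t+1},\w_t)$ upgrades the bound to the clean descent $d_{t+1}\le (1-\eta)d_t$, so $\sum_{t\ge 1}d_t\le\max\{0,d_1\}/\eta$.

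\emph{Free dual comparator and ergodic bound.} For an arbitrary $\w\in\E_2$, I use $\Lp(\q_{t+1},\w)=\Lp(\q_{t+1},\w_t)+\langle\w-\w_t,\K\q_{t+1}\rangle$, substitute $\K\q_{t+1}=\tfrac{1}{\mu}(\w_{t+1}-\w_t)$, and apply the three-point identity to $2\langle\w-\w_t,\w_{t+1}-\w_t\rangle$. Combining with the previous display (before adding the final $\mu\|\K\q_{t+1}\|^2$) and dropping the nonnegative $\tfrac{\rho+\mu}{2}\|\K\q_{t+1}\|^2$ on the left gives
\[
h(\q_{t+1})-h(\q^*)+\langle\w,\K\q_{t+1}\rangle + \tfrac{1}{2\mu}\|\w-\w_{t+1}\|^2 \le (1-\eta)d_t + \tfrac{1}{2\mu}\|\w-\w_t\|^2.
\]
Summing over $t=0,\dots,T-1$ telescopes the dual-distance terms; combined with $\sum d_t\le\max\{0,d_1\}/\eta$ and $1/\eta=2\lambda(\bs+2\mu(\|\A\|+1)^2)/\as$, this reproduces the $d_1$ contribution in $B(\rho,\mu)$. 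Taking the supremum over $\|\w\|\le c$ turns $\langle\w,\K\bar\q_T\rangle$ into $c\|\K\bar\q_T\|$ and bounds $\|\w-\w_0\|$ by $c+\|\w_0\|$, so Jensen applied to the convex map $\q\mapsto h(\q)+\langle\w,\K\q\rangle$ yields $T[h(\bar\q_T)-h(\q^*)+c\|\K\bar\q_T\|]\le B(\rho,\mu)$. Dropping the nonnegative $c\|\K\bar\q_T\|$ on the left gives the objective bound. For the feasibility bound I invoke the saddle-point lower bound $h(\bar\q_T)-h(\q^*)\ge -\|\w^*\|\cdot\|\K\bar\q_T\|$ (which follows from Assumption~\ref{asmp:saddle}, since $(\q^*,\w^*)$ is also a saddle point of the standard Lagrangian): since $c\ge 2\|\w^*\|$ gives $c-\|\w^*\|\ge c/2$, we conclude $\|\K\bar\q_T\|\le 2B(\rho,\mu)/(cT)$.

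\emph{Main obstacle.} The delicate step is the cross-term cancellation in the first paragraph: the $2\mu\K^\top\K\q_t$ augmentation inside the WPO target $\Phi_\lambda$ is engineered so that, after expansion via $\eta\K\vv_t=\K\q_{t+1}-(1-\eta)\K\q_t$ and the polarization identity, its contribution exactly cancels the feasibility-generated smoothness residual $\eta^2\mu\|\K\|^2\|\vv_t-\q_t\|^2$ while simultaneously telescoping into $\mu\|\K\q_t\|^2-\mu\|\K\q_{t+1}\|^2$. Without this design one would retain a $\|\vv_t-\q_t\|^2$ residual that PQG alone cannot absorb, and the $O(1/T)$ rate would collapse.
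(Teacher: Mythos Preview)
Your proof follows the same two–stage strategy as the paper: (i) establish the contraction $d_{t+1}\le(1-\eta)d_t$ by combining $\bs$-smoothness, convexity of $\Rq$, the WPO inequality, and PQG; (ii) telescope the dual three-point identity, sum the $d_t$'s via the geometric series, apply Jensen, and invoke Lemma~\ref{lemma:objective and feasibility convergence}. Your algebraic organization of step (i)---expanding the $2\mu\K^\top\K\q_t$ cross term through $\eta\K\vv_t=\K\q_{t+1}-(1-\eta)\K\q_t$ and the polarization identity---is equivalent to (indeed slightly sharper than) the paper's direct expansion of $\mu\|\K\q_{t+1}\|^2$ in Lemma~\ref{lemma:linear rate}. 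One notational caveat: PQG (Definition~\ref{def:primal quadratic gap}) is only stated for the projection $\q_t^*:=\argmin_{\z\in\Pp}\|\q_t-\z\|^2$, not for an arbitrary saddle-point primal $\q^*$; you should use $\q_t^*$ throughout step (i), exactly as the paper does, which is harmless since the WPO inequality holds for every point of $\Pp$ and $\Lp(\q_t^*,\w_t)=\Lp^*$.

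There is one genuine slip in step (ii). By feeding the dual three-point identity into ``the previous display before adding the final $\mu\|\K\q_{t+1}\|^2$'', you place $(1-\eta)d_t$ on the right-hand side; summing over $t=0,\dots,T-1$ then produces $(1-\eta)\sum_{t=0}^{T-1}d_t$, which contains $(1-\eta)d_0$. This quantity is \emph{not} dominated by $\max\{0,d_1\}/\eta$ (only $\sum_{t\ge 1}d_t$ is), so as written your argument yields a bound in $d_0$, not the $d_1$ that appears in $B(\rho,\mu)$. The repair is immediate and is precisely what the paper does: instead of the descent \emph{inequality} $\Lp(\q_{t+1},\w_t)-\Lp^*\le(1-\eta)d_t-\mu\|\K\q_{t+1}\|^2$, use the \emph{identity} $\Lp(\q_{t+1},\w_t)-\Lp^*=d_{t+1}-\mu\|\K\q_{t+1}\|^2$ (equivalently, compare $\Lp(\q_{t+1},\w)$ directly to $\Lp(\q_{t+1},\w_{t+1})$). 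This replaces $(1-\eta)d_t$ by $d_{t+1}$ on the right, and the telescoped sum becomes $\sum_{t=0}^{T-1}d_{t+1}\le\max\{0,d_1\}/\eta$, matching the $d_1$ contribution in $B(\rho,\mu)$.
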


In order to prove Theorem \ref{thm:main}, we first need to establish two intermediate results. The first result was established in \cite{sabach2019lagrangian}. For the sake of completeness we state it here and reprove it. 
\begin{lemma}\label{lemma:objective and feasibility convergence}
    (Objective and feasibility approximation)
    Let $(\q^*,\w^*)$ be a saddle point of $\Lp$. Let $\q\in\E$, and suppose that $c\geq2\|\w^*\|$, for some $c>0$. If
    \begin{equation}\label{eq:obj feas approx}
        h(\q)-h(\q^*)+c\|\K\q\|+\frac{\rho}{2}\|\K\q\|^2\leq\delta,
    \end{equation}
    holds for some $\delta\geq0$, then the following assertions hold
    \begin{enumerate}
        \item\label{eq:obj boun} $h(\q)-h(\q^*)\leq\delta$.
        \item\label{eq:feas bound} $\|\K\q\|\leq\frac{2\delta}{c}$.
    \end{enumerate}
\end{lemma}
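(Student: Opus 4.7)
The plan is to leverage the saddle point characterization of $(\q^*,\w^*)$ to obtain a one-sided lower bound on $h(\q)-h(\q^*)$ in terms of $\|\K\q\|$, and then plug that bound into the hypothesis \eqref{eq:obj feas approx} to extract both claims.

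First I would exploit the dual side of the saddle point inequality \eqref{eq:saddle point}: the condition $\Lp(\q^*,\w)\leq\Lp(\q^*,\w^*)$ for all $\w\in\E_2$ reads $\langle \w-\w^*,\K\q^*\rangle\leq 0$ for all $\w$, which forces $\K\q^*=\zero$, so that $\Lp(\q^*,\w^*)=h(\q^*)$. Next, applying the primal side $\Lp(\q^*,\w^*)\leq\Lp(\q,\w^*)$ and expanding the definition of the augmented Lagrangian gives
\begin{equation*}
h(\q^*)\;\leq\; h(\q)+\langle \w^*,\K\q\rangle+\tfrac{\rho}{2}\|\K\q\|^2.
\end{equation*}
Cauchy--Schwarz then yields the key lower bound
\begin{equation*}
h(\q)-h(\q^*)\;\geq\; -\|\w^*\|\,\|\K\q\|-\tfrac{\rho}{2}\|\K\q\|^2.
\end{equation*}

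Having isolated this inequality, I would substitute it into the left-hand side of the hypothesis \eqref{eq:obj feas approx}. The quadratic penalty terms $\tfrac{\rho}{2}\|\K\q\|^2$ cancel, leaving $(c-\|\w^*\|)\,\|\K\q\|\leq\delta$. The calibration $c\geq 2\|\w^*\|$ gives $c-\|\w^*\|\geq c/2$, and dividing through proves assertion~\ref{eq:feas bound}, namely $\|\K\q\|\leq 2\delta/c$.

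Finally, assertion~\ref{eq:obj boun} is essentially free: since $c>0$, $\rho>0$, and $\|\K\q\|\geq 0$, the two extra terms $c\|\K\q\|$ and $\tfrac{\rho}{2}\|\K\q\|^2$ in \eqref{eq:obj feas approx} are non-negative, so dropping them from the left-hand side preserves the inequality and gives $h(\q)-h(\q^*)\leq\delta$ directly. The only real subtlety in the proof is the identification $\K\q^*=\zero$ (so that $\Lp^*=h(\q^*)$) and the correct choice of the constant $2$ in the condition $c\geq 2\|\w^*\|$, which is precisely what makes the coefficient $c-\|\w^*\|$ at least $c/2$ and thereby converts the combined bound into a clean estimate on $\|\K\q\|$; no deeper estimates (smoothness, PQG, or WPO properties) are needed here.
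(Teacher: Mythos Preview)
Your proposal is correct and follows essentially the same approach as the paper: both derive $h(\q^*)\leq h(\q)+\langle\w^*,\K\q\rangle+\tfrac{\rho}{2}\|\K\q\|^2$ from the saddle point inequality, combine it with the hypothesis via Cauchy--Schwarz, and use $c\geq 2\|\w^*\|$ to extract the feasibility bound, while the objective bound follows by dropping the non-negative terms. Your explicit derivation of $\K\q^*=\zero$ is a nice touch that the paper leaves implicit.
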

\begin{proof}
    The first assertion holds since $c\|\K\q\|+\frac{\rho}{2}\|\K\q\|^2\geq0$. Moreover, since $(\q^*,\w^*)$ is a saddle point of $\Lp$, we have    
    \begin{equation}\label{eq:saddle ineq}
            h(\q^*)=\Lp(\q^*,\w^*)\leq\Lp(\q,\w^*)=h(\q)+\langle\w^*,\K\q\rangle+\frac{\rho}{2}\|\K\q\|^2.
    \end{equation}
    Combining \eqref{eq:obj feas approx} and \eqref{eq:saddle ineq} we have    
    \begin{equation*}
        c\|\K\q\|\leq\delta+\langle\w^*,\K\q\rangle\leq\delta+\|\w^*\|\cdot\|\K\q\|\leq\delta+\frac{c}{2}\|\K\q\|.
    \end{equation*}
    Rearranging the last inequality, yields the second assertion.
\end{proof}

The following lemma accounts for the main step in the proof of our fast convergence rates given in Theorem \ref{thm:main} and also accounts for the main technical novelty of our paper. It leverages both the primal quadratic gap property and the weak proximal oracle, to obtain a linear convergence rate for the value of the augmented Lagrangian.
\begin{lemma}\label{lemma:linear rate}
    Let $\{(\q_t,\w_t)\}_{t\in\N}$ be a sequence generated by Algorithm \ref{alg:Alg1} with $\eta$ and $\mu$ as given in Theorem \ref{thm:main}. Then, for all $t \in \N$ we have that $d_{t+1}\leq(1-\eta)^td_1$.
\end{lemma}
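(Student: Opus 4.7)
The proof is by induction on $t$, reducing the claim to the one-step contraction $d_{t+1} \le (1-\eta)d_t$. The first move is to decompose one iteration into a primal and a dual part: since $\w_{t+1} = \w_t + \mu\K\q_{t+1}$, we get
\[
d_{t+1} - d_t = \bigl[\Lp(\q_{t+1},\w_t) - \Lp(\q_t,\w_t)\bigr] + \mu\|\K\q_{t+1}\|^2.
\]
The plan is to bound the primal bracket using the WPO guarantee and combine with the dual contribution so that the $\mu\|\K\q_{t+1}\|^2$ term cancels.

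To match the quadratic term inside $\Phi_\lambda$, I will work with the shifted smooth part $\tilde S(\q,\w) := S(\q,\w) + \mu\|\K\q\|^2$, whose gradient in $\q$ equals $\nabla_\q S + 2\mu\K^{\top}\K\q$ (the exact linear part appearing in $\Phi_\lambda$), and which is smooth with parameter $\hat\beta := \bs + 2\mu\|\K\|^2$ by Lemma~\ref{lemma:loose upper bound for K} applied with penalty $\rho + 2\mu$. Applying the descent inequality for $\tilde S(\cdot,\w_t)$ along $\q_{t+1} = (1-\eta)\q_t + \eta\vv_t$, combining with convexity of $\Rq$, and undoing the shift via $S = \tilde S - \mu\|\K\q\|^2$ yields
\[
\Lp(\q_{t+1},\w_t) - \Lp(\q_t,\w_t) \le \eta\bigl[\Phi_1(\vv_t) - \Phi_1(\q_t)\bigr] + \mu\|\K\q_t\|^2 - \mu\|\K\q_{t+1}\|^2,
\]
where I use the identity $\Phi_1(\q_t) = \Phi_\lambda(\q_t)$, since the quadratic term of $\Phi_\lambda$ vanishes at $\vv=\q_t$.

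Next I will bring in the PQG property by letting $\q^* \in \Pp$ be the projection of $\q_t$ onto $\Pp$ as in Definition~\ref{def:primal quadratic gap}, together with the WPO inequality $\Phi_1(\vv_t) \le \Phi_\lambda(\q^*)$. Expanding $\Phi_\lambda(\q^*) - \Phi_\lambda(\q_t)$, applying PQG to $\langle \q^*-\q_t, \nabla_\q S(\q_t,\w_t)\rangle$, and using $\K\q^* = \zero$ to simplify $2\mu\langle \K^{\top}\K\q_t, \q^* - \q_t\rangle = -2\mu\|\K\q_t\|^2$ give
\[
\Phi_1(\vv_t) - \Phi_1(\q_t) \le \Lp(\q^*,\w_t) - \Lp(\q_t,\w_t) + \tfrac{\lambda\eta\hat\beta - \as}{2}\|\q^*-\q_t\|^2 - 2\mu\|\K\q_t\|^2.
\]
The saddle-point inequality $\Lp(\q^*,\w_t) \le \Lp(\q^*,\w^*) = \Lp^*$ replaces the difference by $-d_t$. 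Stacking the three displayed bounds and cancelling $\mu\|\K\q_{t+1}\|^2$ leaves
\[
d_{t+1} \le (1-\eta)d_t + \mu(1-2\eta)\|\K\q_t\|^2 + \tfrac{\eta(\lambda\eta\hat\beta - \as)}{2}\|\q^*-\q_t\|^2.
\]

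The main obstacle is to show that the two residual terms are jointly non-positive under the stated step sizes, and this is exactly where the specific choices of $\eta$ and $\mu$ enter. The key observation is $\|\K\q_t\|^2 = \|\K(\q_t - \q^*)\|^2 \le (\|\A\|+1)^2\|\q_t - \q^*\|^2$ (since $\K\q^* = \zero$ and $\|\K\| \le \|\A\|+1$), which collapses both residuals onto $\|\q^*-\q_t\|^2$ and reduces the task to establishing
\[
\mu(1-2\eta)(\|\A\|+1)^2 + \tfrac{\eta(\lambda\eta\hat\beta - \as)}{2} \le 0.
\]
The choice $\eta = \as/[2\lambda(\bs + 2\mu(\|\A\|+1)^2)]$ forces $\lambda\eta\hat\beta \le \as/2$, so the second term is at most $-\eta\as/4$. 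Demanding $\mu(\|\A\|+1)^2 \le \eta\as/4$ and eliminating $\eta$ produces a quadratic inequality in $\mu$ whose positive root is exactly $\mu \le (\sqrt{\lambda\as^2 + \lambda^2\bs^2} - \lambda\bs)/[4\lambda(\|\A\|+1)^2]$, the hypothesis imported from Theorem~\ref{thm:main}. Under this condition the residual is non-positive, establishing the per-step contraction $d_{t+1} \le (1-\eta)d_t$ and hence, by induction, $d_{t+1} \le (1-\eta)^t d_1$.
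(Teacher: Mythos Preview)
Your proof is correct and follows essentially the same route as the paper: decompose $d_{t+1}-d_t$ into a primal descent part and the dual contribution $\mu\|\K\q_{t+1}\|^2$, invoke the WPO inequality against the projected optimum $\q^*$, apply the PQG property, bound $\|\K\q_t\|^2=\|\K(\q_t-\q^*)\|^2\le(\|\A\|+1)^2\|\q_t-\q^*\|^2$, and verify that the prescribed $\eta,\mu$ render the residual coefficient non-positive (both arguments reduce to the same quadratic $r\le\as^2/[4\lambda(\bs+r)]$ with $r=2\mu(\|\A\|+1)^2$). Your packaging via the shifted function $\tilde S=S+\mu\|\K\cdot\|^2$ is a mild reorganization---it produces an exact cancellation of $\mu\|\K\q_{t+1}\|^2$ and leaves $\mu(1-2\eta)\|\K\q_t\|^2$, whereas the paper expands $\|\K\q_{t+1}\|^2$ directly and drops the resulting $-2\mu\eta\|\K\q_t\|^2$ term---but the substance is the same; note only that your use of $\|\K\q_t\|^2\le(\|\A\|+1)^2\|\q_t-\q^*\|^2$ as an \emph{upper} bound tacitly uses $1-2\eta\ge 0$, which holds since $\as\le\bs$ forces $\eta\le 1/2$.
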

\begin{proof}
    For any $t\in\N$, let $(\q_t^*,\w^*)$ be a saddle point of $\Lp$, such that the pair $\{\q_t,\q_t^*\}$ satisfies the primal quadratic gap (see \eqref{eq:pqg}), i.e., $\q_t^*=\argmin_{\q^*\in\Pp}\|\q_t-\q^*\|^2$ and $\w^*\in\D$. Since $\Lp(\q_t^*,\w^*)=\Lp^*$, we have    
    \begin{align}
            d_{t+1}&=\Lp(\q_{t+1},\w_{t+1})-\Lp(\q_t^*,\w^*) \nonumber\\
            &=\Lp(\q_{t+1},\w_{t+1})-\Lp(\q_{t+1},\w_t)+\Lp(\q_{t+1},\w_t)-\Lp(\q_t,\w_t)+\Lp(\q_t,\w_t)-\Lp(\q_t^*,\w^*) \nonumber\\
            &=\langle\w_{t+1}-\w_t,\K\q_{t+1}\rangle+\Lp(\q_{t+1},\w_t)-\Lp(\q_t,\w_t)+d_t \nonumber\\
            &=d_t+\mu\|\K\q_{t+1}\|^2+\Lp(\q_{t+1},\w_t)-\Lp(\q_t,\w_t), \label{eq:dt+1}
    \end{align}
    where the last equality follows from \eqref{eq:dual step}. Now, using the feasibility of $\q_t^*$ ($\K\q_t^* = 0$), we have    
    \begin{align}
            \|\K\q_{t+1}\|^2 & =\|\K\q_{t+1}-\K\q_t+\K\q_t-\K\q_t^*\|^2 \nonumber\\
            &=\|\K\q_{t+1}-\K\q_t\|^2+2\langle \K\q_{t+1}-\K\q_t,\K\q_t-\K\q_t^*\rangle+\|\K\q_t-\K\q_t^*\|^2 \nonumber\\
            &=\eta^2\|\K\vv_t-\K\q_t\|^2+2\eta\langle \K\vv_t-\K\q_t,\K\q_t\rangle+\|\K\q_t-\K\q_t^*\|^2 \nonumber\\
            &\leq \eta^2\|\K\|^2\cdot\|\vv_t-\q_t\|^2+\eta\langle\vv_t-\q_t,2\K^{\top}\K\q_t\rangle+\|\K\|^2\cdot\|\q_t-\q_t^*\|^2,\label{eq:rho part}
    \end{align}
    where the last equality follows from \eqref{eq:primal step}. By the $\bs$-smoothness of $S(\cdot,\w)$, the convexity of $\Rq(\q)$ and \eqref{eq:primal step}, we have    
    \begin{align}
        \Lp(\q_{t+1},\w_t)-\Lp(\q_t,\w_t)& =S(\q_{t+1},\w_t)-S(\q_t,\w_t)+\Rq(\q_{t+1})-\Rq(\q_t) \nonumber\\
        & \leq\frac{\eta^2\bs}{2}\|\vv_t-\q_t\|^2+\eta\langle\vv_t-\q_t,\nabla_\q S(\q_t,\w_t)\rangle+\eta(\Rq(\vv_t)-\Rq(\q_t)).\label{eq:beta part}
    \end{align}
    Now, from \eqref{eq:WP}, recalling \eqref{eq:algorithm's wpo}, since $\q_t^*\in\Pp$, we have that
    \begin{align}
            &\Rq(\vv_t)+\langle \vv_t,\nabla_\q S(\q_t,\w_t)+2\mu\K^{\top}\K\q_t\rangle+ \frac{\eta(\bs+2\mu\|\K\|^2)}{2}\|\vv_t-\q_t\|^2\leq \nonumber \\
            &\Rq(\q_t^*)+\langle\q_t^*,\nabla_\q S(\q_t,\w_t)+2\mu\K^{\top}\K\q_t\rangle + \frac{\lambda\eta(\bs+2\mu\|\K\|^2)}{2}\|\q_t^*-\q_t\|^2.\label{eq:wpo use}
    \end{align}
    By combining \eqref{eq:dt+1}, \eqref{eq:rho part}, \eqref{eq:beta part} and \eqref{eq:wpo use} we obtain the following   
    \begin{align}
            d_{t+1}-d_t & =\mu\|\K\q_{t+1}\|^2+\Lp(\q_{t+1},\w_t)-\Lp(\q_t,\w_t) \nonumber\\
            & \leq \frac{2\mu\|\K\|^2}{2}\|\q_t-\q_t^*\|^2+\frac{\eta^2(\bs+2\mu\|\K\|^2)}{2}\|\vv_t-\q_t\|^2+\eta(\Rq(\vv_t)-\Rq(\q_t)) \nonumber\\
            &+\eta\langle\vv_t-\q_t,\nabla_\q S(\q_t,\w_t)+2\mu\K^{\top}\K\q_t\rangle \nonumber\\
            &\leq\frac{1}{2}\left(2\mu\|\K\|^2+\lambda\eta^2(\bs+2\mu\|\K\|^2)\right)\|\q_t-\q_t^*\|^2+\eta(\Rq(\q_t^*)-\Rq(\q_t)) \nonumber\\
            & + \eta\langle\q_t^*-\q_t,\nabla_\q S(\q_t,\w_t)\rangle+2\mu\eta\langle \K\q_t^*-\K\q_t,\K\q_t\rangle.\label{eq:weak oracle}
    \end{align}    
    Let us, for convenience, denote $r=2\mu(\|\A\|+1)^2\geq2\mu\|\K\|^2$. Now, recall that $\q_t^*$ was chosen to satisfy \eqref{eq:pqg}. Therefore, from \eqref{eq:weak oracle} and \eqref{eq:pqg}, as well as the feasibility of $\q_t^*$, we obtain that    
    \begin{align}
            d_{t+1}-d_t\leq&\frac{1}{2}\left(2\mu\|\K\|^2-\eta\as+\lambda\eta^2(\bs+2\mu\|\K\|^2)\right)\|\q_t-\q_t^*\|^2-2\mu\eta\|\K\q_t\|^2\\+&\eta[S(\q_t^*,\w_t)+\Rq(\q_t^*)-S(\q_t,\w_t)-\Rq(\q_t) \nonumber]\\
            \leq&\frac{1}{2}\left(r-\eta\as+\lambda\eta^2(\bs+r)\right)\|\q_t-\q_t^*\|^2+\eta[\Lp(\q_t^*,\w_t)-\Lp(\q_t,\w_t)].\label{eq:pqg use}
    \end{align}
    Since $\q_t^*$ is a feasible solution of the Problem \eqref{pblm:problem}, we have that $\Lp(\q_t^*,\w_t)=\Lp(\q_t^*,\w^*)=\Lp^*$. Thus, from \eqref{eq:pqg use}, after substituting $\eta=\frac{\as}{2\lambda(\bs+r)}\in[0,1]$, we have for all $t \in \N$ (recall that $\lambda \geq 1$)
    \begin{equation*}
            d_{t+1}\leq(1-\eta)d_t+\frac{1}{2}(r-\eta\as+\lambda\eta^2(\bs+r))\|\q_t-\q_t^*\|^2 =(1-\eta)d_t+\frac{1}{2}\left(r-\frac{\as^2}{4\lambda(\bs+r)}\right)\|\q_t-\q_t^*\|^2.
    \end{equation*}
    Taking any $\mu>0$ such that $2\mu(\|\A\|+1)^2\equiv r\leq\frac{\sqrt{\lambda\as^2+\lambda^2\bs^2}-\lambda\bs}{2\lambda}$, we get that $r-\frac{\as^2}{4\lambda(\bs+r)}\leq0$, and therefore we immediately obtain that
    $$d_{t+1}\leq(1-\eta)d_t,$$
    which proves the desired result.
\end{proof}

We will now proceed to prove our convergence rate result.
\begin{proof}[Proof of Theorem \ref{thm:main}]
    First, let $\{\w_t\}_{t\in\N}$ be the sequence of dual points generated by Algorithm \ref{alg:Alg1}. For any $t\in\N$ and any $\w\in\E_2$, we define $$\Delta_t(\w) := \frac{1}{2}\|\w-\w_t\|^2-\frac{1}{2}\|\w-\w_{t+1}\|^2.$$
    \\
    We notice that for any $t\in\N$ and $\w\in\E_2$ we have
    \begin{align*}\label{eq:telescopic bound}
        \Lp(\q_{t+1},\w)-\Lp(\q_{t+1},\w_{t+1})&=\langle\w-\w_{t+1},\K\q_{t+1}\rangle\\&=\frac{1}{\mu}\langle\w-\w_{t+1},\w_{t+1}-\w_t\rangle\\&=\frac{1}{\mu}\Delta_t(\w)-\frac{1}{2\mu}\|\w_{t+1}-\w_t\|^2\\&\leq\frac{1}{\mu}\Delta_t(\w).
    \end{align*}
    Thus, summing the above inequality for all $t =0 , 1 , \ldots , T - 1$, we have
    \begin{align}
            \sum_{t=0}^{T-1}(\Lp(\q_{t+1},\w)-\Lp(\q_{t+1},\w_{t+1})) &\leq \frac{1}{\mu}\sum_{t=0}^{T-1}\Delta_t(\w) \nonumber\\
            &= \frac{1}{2\mu}\sum_{t=0}^{T-1}(\|\w-\w_t\|^2-\|\w-\w_{t+1}\|^2) \nonumber\\
            &=\frac{1}{2\mu}(\|\w-\w_0\|^2-\|\w-\w_T\|^2)\\
            &\leq \frac{1}{2\mu}\|\w-\w_0\|^2 \nonumber\\
            &\leq \frac{1}{2\mu}(\|\w\|+\|\w_0\|)^2.\label{eq:telescopic sum}
    \end{align}
    In addition, from Lemma \ref{lemma:linear rate}, we have that $d_{t+1}\leq d_1(1-\eta)^t$. Thus, if $d_{1} > 0$, we have that
    \begin{equation*}
        \sum_{t=0}^{T-1}(\Lp(\q_{t+1},\w_{t+1})-\Lp(\q^*,\w^*))=\sum_{t=0}^{T-1}d_{t+1}\leq d_1\sum_{t=0}^{T-1}(1-\eta)^t\leq d_1\sum_{t=0}^{\infty}(1-\eta)^t=\frac{d_1}{\eta},
    \end{equation*}
    where the second equality follows from the classical result on the geometric series. Hence, using Lemma \ref{lemma:loose upper bound for K}, we obtain that
    \begin{equation}\label{eq:geo seq sum d>0}
        \sum_{t=0}^{T-1}(\Lp(\q_{t+1},\w_{t+1})-\Lp(\q^*,\w^*))\leq\frac{2d_1}{\as}\left(\beta+(2\mu+\rho)(\|\A\|+1)^2\right).
    \end{equation}
    Otherwise, if $d_1\leq0$, we have    
    \begin{equation}\label{eq:geo seq sum d<=0}
            \sum_{t=0}^{T-1}(\Lp(\q_{t+1},\w_{t+1})-\Lp(\q^*,\w^*))=\sum_{t=0}^{T-1}d_{t+1}\leq\sum_{t=0}^{T-1}d_1(1-\eta)^t\leq0.
    \end{equation}
    Combining \eqref{eq:geo seq sum d>0} and \eqref{eq:geo seq sum d<=0} yields    
    \begin{equation}\label{eq:geo seq sum}
            \sum_{t=0}^{T-1}(\Lp(\q_{t+1},\w_{t+1})-\Lp(\q^*,\w^*))\leq\max\left\{0,\frac{2d_1}{\as}(\beta+(\rho+2\mu)(\|\A\|+1)^2)\right\}.
    \end{equation}
    Now, since $\Lp(\cdot,\w)$ is convex, and since $\Lp(\q^*,\w^*)=\Lp(\q^*,\w)$ for any $\w\in\E_2$, which follows from the feasibility of $\q^*$, we get, for any $\w\in\E_2$, that    
    \begin{align}
            h(\bar{\q}_T)-h(\q^*)+\langle\w,\K\bar{\q}_T\rangle+\frac{\rho}{2}\|\K\bar{\q}_T\|^2&=\Lp(\bar{\q}_T,\w)-\Lp(\q^*,\w) \nonumber\\
            &\leq\frac{1}{T}\sum_{t=0}^{T-1}(\Lp(\q_{t+1},\w)-\Lp(\q^*,\w)) \nonumber\\
            &=\frac{1}{T}\sum_{t=0}^{T-1}(\Lp(\q_{t+1},\w)-\Lp(\q^*,\w^*)) \nonumber\\
            &=\frac{1}{T}\left[\sum_{t=0}^{T-1}(\Lp(\q_{t+1},\w)-\Lp(\q_{t+1},\w_{t+1}))\right. \nonumber\\
            & +\left.\sum_{t=0}^{T-1}(\Lp(\q_{t+1},\w_{t+1})-\Lp(\q^*,\w^*))\right] \nonumber\\
            &\leq\frac{1}{T}\left[\frac{(\|\w\|+\|\w_0\|)^2}{2\mu} \right.\nonumber\\&\left.+ \max\left\{0,\frac{2d_1}{\as}(\beta+(\rho+2\mu)(\|\A\|+1)^2)\right\}\right],\label{eq:upper bound for Q bar}
    \end{align}
    where the last inequality follows from \eqref{eq:telescopic sum} and \eqref{eq:geo seq sum}. Maximizing both sides over $\|\w\|\leq c$, we get    
    \begin{equation*}
        \begin{split}
            h(\bar{\q}_T)-h(\q^*)+c\|\K\bar{\q}_T\|+\frac{\rho}{2}\|\K\bar{\q}_T\|^2&\leq\frac{1}{T}\left[\frac{(c+\|\w_0\|)^2}{2\mu}\right.\\&\left.+\max\left\{0,\frac{2d_1(\beta+(\rho+2\mu)(\|\A\|+1)^2)}{\as}\right\}\right]\\&=\frac{B(\rho,\mu)}{T}.
        \end{split}
    \end{equation*}
    The result now follows from Lemma \ref{lemma:objective and feasibility convergence} by substituting $\delta=B(\rho,\mu)/T$.
\end{proof}

\section{Numerical Experiments}\label{sec:Numerical Experiments}
We compare the empirical performance of our algorithm with the state-of-the-art projection-free conditional gradient-based algorithm from  \cite{yurtsever2019conditional}. For each of the algorithms we tested two variants, see Table \ref{tab:algorithms compared} and discussions in the sequel for details.

We consider two tasks which involve optimization with low-rank matrices and are cast as optimization over the spectrahedron: estimation of a low-rank and sparse covariance matrix from noisy observations, and the semidefinite relaxation for Max Cut. For both tasks, implementing the WPO in our algorithms involves a low-rank SVD with rank that is at least that of the optimal solution (see  Section \ref{sec:examples:matrix}). Thus, throughout this section we denote the rank of the optimal solution by $r^*$ and our algorithm's estimate of it by $\hat{r}^*$.

\subsection{Low-rank and sparse covariance  estimation}
We consider the following convex relaxation for recovering a low-rank and sparse positive semidefinite matrix from a noisy matrix observation $\widehat{\boldsymbol\Sigma}$:
\begin{equation}\tag{CME}\label{pblm:CME}
        \min_{\Ss}\frac{1}{2}\|\Ss-\widehat{\boldsymbol\Sigma}\|_F^2\\
       ~~ \textrm{s.t.}~~ \Ss\succeq0,\textrm{tr}(\Ss)=\tau,\|\Ss\|_1\leq s.
\end{equation}
As discussed in Section \ref{sec:examples:matrix}, this problem satisfies all the assumptions required for our theoretical guarantees to hold.

\paragraph*{Data generation.} Our experiment is inspired by previous experiments conducted in \cite{richard2012estimation} and \cite{gidel2018frank}. We first generate a block diagonal, sparse and low-rank covariance matrix $\boldsymbol\Sigma\in\RR{d}{d}$, where we set $d=400$. Then, we draw $d$ vectors $\z_i \sim \mathcal{N}(0, \boldsymbol\Sigma)$, add a Gaussian noise $\mathcal{N}(0,\sigma=0.6)$ to each entry of $\z_i$, and create the noisy matrix $\widehat{\boldsymbol\Sigma}:=\frac{1}{d}\sum_{i=1}^d \z_i\z_i^{\top}$. To create the blocks of $\boldsymbol\Sigma$, we use $r$ blocks of the form $\uu\uu^{\top}$ where $\uu \sim\mathcal{U}([-1,1])$. This way we ensure that $\boldsymbol\Sigma$ is of rank at most $r$. In order to enforce sparsity, while ensuring the low rank of $\boldsymbol\Sigma$, \emph{before} computing $\uu\uu^{\top}$, we only keep the entries $u_i$ for which $|u_i| > 0.9$ (the rest of the entries become zero). Our choices of $\tau$ and $s$, the radius of the nuclear norm ball and the $\ell_1$ norm ball, respectively, are chosen as the nuclear norm and $\ell_1$ norm of $\boldsymbol\Sigma$.  We use rank values $r\in\{5, 10, 20\}$.

\subsubsection{Implementation details}

\paragraph{Our algorithm.}
\begin{itemize}
    \item Implementing the algorithm, we set $\Rx$ and $\Ry$ to be indicators of the spectrahedron $\s$ and the $\ell_1$-norm ball of radius $s$, respectively, and we set $\A$ to be the identity operator. Hence in each iteration the main computations of our algorithm are one low-rank SVD and one $\ell_1$-norm ball projection. We also set $f(\x)\equiv f(\Ss):=\frac{1}{2}\|\Ss-\widehat{\boldsymbol\Sigma}\|_F^2$. In particular, we set the rank estimate for our algorithm, which determines the rank of the SVD computations, as $\hat{r}^* = r$.
    \item Given the values of $\rho$ and $\mu$, we set the value of the primal step size $\eta$ to be sent to the WPO according to the  formula given in Theorem \ref{thm:main}, with respect to the value of $\mu$, and the values of $\as$ and $\bs$ induced from the value of $\rho$ (notice here the WPO parameter is $\lambda=1$). Note that for any $\mu>0$, $\eta\in[0,1]$. 
    \item For the actual primal step, we used line search. That is, we set:
    $$\eta_t=\argmin_{\eta\in[0,1]}\{\mu\|\K\q_{t+1}(\eta)\|^2+\Lp(\q_{t+1}(\eta),\w_{t})\},$$ where $\q_{t+1}(\eta):=\eta \vv_t+(1-\eta)\q_t$, and $\vv_t$ is the output of the WPO. This problem has a closed form solution. It could be easily verified that using this line search does not change the gurantees of Theorem  \ref{thm:main}.
    \item We implemented and examined two variants of our algorithm. One which returns the mean of the sequence of primal points generated by Algorithm \ref{alg:Alg1}, which we dub "Mean". This is the variant for which we have a theoretically guaranteed convergence rate. The second variant, which we dub "Last", returns the last primal point of the sequence generated by Algorithm \ref{alg:Alg1}. This variant seems to be a natural improvement for the Mean variant in practice, in terms of convergence rate.
\end{itemize}

\paragraph{The baseline algorithm.}
The baseline algorithm for our experiments is the CGAL algorithm introduced in \cite{yurtsever2019conditional} (Algorithm 1 in the paper). This algorithm solves Problem \eqref{pblm:problem} if $\Rx$ and $\Ry$ are indicators of convex sets $\X\subset\E_1$, which is compact, and $\Y\subseteq\E_2$. \cite{yurtsever2019conditional} suggests two methods for computing the dual step size at every time step --- decreasing step sizes (which we dub "decr"), and constant step sizes (which we dub "const"). We present results for both choices. These dual updates also depend on a sequence of dual bounds $\{D_k\}_{k\in\N}$. In accordance with the recommendation in \cite{yurtsever2019conditional} for best practical performance, we set $D_k=D_\X\|\A\|\rho_0,\forall k\in\N$, where $D_\X$ is the diameter of $\X$ and $\rho_0$ is an initial penalty parameter for the augmented Lagrangian (this algorithm uses increasing values of $\rho$, while we use a constant value). In the implementation of this algorithm we also set $\A$ to be the identity operator, $\X$ to be the spectrahedron, and $\Y$ to be an $\ell_1$-norm ball. One iteration of CGAL, thus, requires a single rank-one SVD operation and two projections onto the $\ell_1$-norm ball.

\paragraph{SVD implementation and rank overestimation.}\label{sec:SVD}
Both algorithms require low-rank SVD computations. The baseline requires a single rank-one SVD on each iteration, and our algorithm performs a single rank-$\hat{r}^*$ SVD every iteration, where we recall $\hat{r}^*$ is an estimation of $r^*$, the rank of the optimal solution, which is assumed to be small. We perform these computations for both algorithms using the \emph{scipy.sparse.linalg.eigsh()} built-in function in Python, which gets as an input the rank of the SVD required. For our algorithm, we examined two cases. In the first one, we set $\hat{r}^*=r$. In the second case, we purposely overestimated $r^*$ by a factor of 1.5, meaning we set $\hat{r}^*=1.5r$. Another parameter of this function is a tolerance parameter, which serves as a stopping condition, which we set to 0.01 for both algorithms, to avoid long running times reaching irrelevant levels of precision.

\begin{table}[H]
    \centering
{\small
    \begin{tabular}{|c|p{10cm}|c|}
         \hline
         Algorithm & Description & SVD rank\\
         \hline
         Mean (ours) & Returns the mean (ergodic) of the primal sequence produced by Algorithm \ref{alg:Alg1} & $\hat{r}^*$\\
         \hline
         Last (ours) & Returns the last primal point produced by Algorithm \ref{alg:Alg1} & $\hat{r}^*$\\
         \hline
         CGAL-const & Baseline algorithm from \cite{yurtsever2019conditional} using "const" option for dual updates & 1\\
         \hline
         CGAL-decr & Baseline algorithm from \cite{yurtsever2019conditional} using "decr" option for dual updates & 1\\
         \hline
    \end{tabular}
    \caption{Description of algorithms compared in the empirical study. The last column specifies the rank of SVD needed to update primal variable at each iteration.}\label{tab:algorithms compared}}
    
\end{table}

\paragraph{Manual Tuning of hyper parameters.}
In our algorithm, given the value of the quadratic penalty parameter $\rho$, our theory suggests taking values of the dual step size $\mu$ which are often highly pessimistic in practice (less than $10^{-3}$ for this experiment). We thus increase the value of $\mu$ beyond its theoretical bound for better practical results and simply set $\mu=0.2$, which seems to work well. For the tuning of $\rho$, given the choice $\mu=0.2$, we started with 1, and kept multiplying/dividing by a factor of 5, until a parameter outperformed its neighbors (the neighbors of 1 are 0.2 and 5, to be clear), over the average of  20 i.i.d. runs. The tuning was done separately for the two variants of our algorithm ("Mean" and "Last"). The tuning of the parameter $\rho_0$ for the baseline algorithm was done similarly, and was done separately for its two different variants (``decr'' and ``const''). The values of $\rho$ for our algorithm and $\rho_0$ for the baseline are presentes in Table \ref{tab:hyper parameters}.

\begin{table}[H]
    \centering
    \begin{tabular}{|c|c|c|c|}
         \hline
         \backslashbox{Algorithm}{$r$} & 5 & 10 & 20\\
         \hline
         Last (ours) & 25 & 5 & 1\\
         \hline
         Mean (ours) & 5 & 5 & 1\\
         \hline
         CGAL-const & 1 & 0.2 & 0.2\\
         \hline
         CGAL-decr & 1 & 1 & 1\\
         \hline
    \end{tabular}
    \caption{Values of $\rho$ (our algorithm, top two rows) and $\rho_0$ (baseline algorithm, bottom two rows).}
    \label{tab:hyper parameters}
\end{table}

\paragraph{Initialization.}
For both algorithms we initialized $\Ss$ to be the projection of $\widehat{\boldsymbol\Sigma}$ onto the spectrahedron of trace $\tau$, and the dual variable to be zero. In addition, for our algorithm, we initialized the additional variable $\y$ to be the projection of $\widehat{\boldsymbol\Sigma}$ onto the $\ell_1$ norm ball of radius $s$.

\subsubsection{Results}

We ran both algorithms (two variants for each) for a fixed number of iterations $T=2000$. The results are the averages over 20 i.i.d. runs of the experiment (each experiment randomly selects $\boldsymbol\Sigma$ and $\widehat{\boldsymbol\Sigma}$). In the graphs, we plot the (normalized) objective $\frac{\|\Ss-\widehat{\boldsymbol\Sigma}\|_F^2}{2\|\widehat{\boldsymbol\Sigma}\|_F^2}$, the distance from feasibility $\|\Ss-P_{\ell_1(s)}(\Ss)\|_F$, where $P_{\ell_1(s)}(\Ss)$ is the projection of $\Ss$ onto the $\ell_1$-norm ball of radius $s$, and the recovery error of $\boldsymbol\Sigma$ measured by $\frac{\|\Ss-\boldsymbol\Sigma\|_F^2}{2\|\boldsymbol\Sigma\|_F^2}$. All measures are plotted both w.r.t. the number of iterations and  the runtime (in seconds). Initially we set the SVD rank parameter for our algorithm to be exactly $r$, i.e., we set $\hat{r}^* = r$. To simulate a more challenging and realistic setting, we redid the experiments  for $r=10$ and $r=20$, but this time we run our algorithm with a 1.5x overestimate of the rank, i.e., we use for it SVD computations of rank $\hat{r}^*=15$ and $\hat{r}^*=30$, respectively. All other parameters remain unchanged (note this does not affect the baseline which regardless of the rank performs only rank-one SVD computations). The complete set of results is given in Figures \ref{fig:r5}, \ref{fig:r10}, \ref{fig:r20}, \ref{fig:r15}, \ref{fig:r30}. We can see that w.r.t. all measures, both of our variants clearly outperform the baseline. 

\begin{figure}[H]
\centering
\begin{multicols}{3}
    \includegraphics[width=\linewidth]{./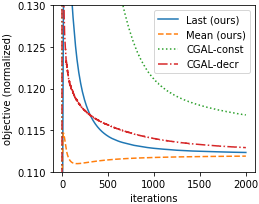}\par
    \includegraphics[width=\linewidth]{./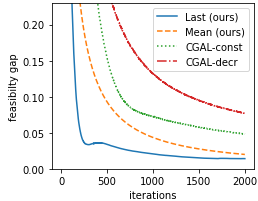}\par  
          \includegraphics[width=\linewidth]{./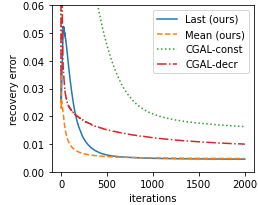}\par
\end{multicols}
\begin{multicols}{3}
    \includegraphics[width=\linewidth]{./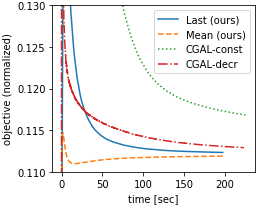}\par
    \includegraphics[width=\linewidth]{./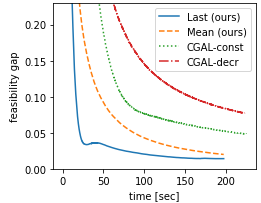}\par
    \includegraphics[width=\linewidth]{./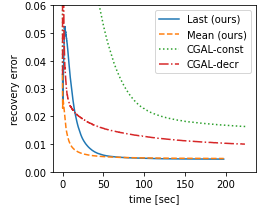}\par
\end{multicols}
\caption{Results for low-rank and sparse covariance estimation for $r=\hat{r}^*= 5$.}
\label{fig:r5}
\end{figure}

\begin{figure}[H]
\centering
\begin{multicols}{3}
    \includegraphics[width=\linewidth]{./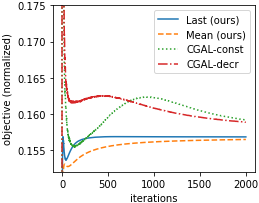}\par
    \includegraphics[width=\linewidth]{./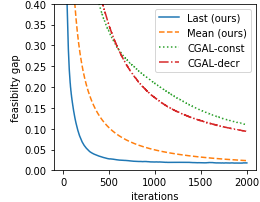}\par        
    \includegraphics[width=\linewidth]{./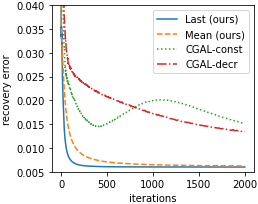}\par
\end{multicols}
\begin{multicols}{3}
    \includegraphics[width=\linewidth]{./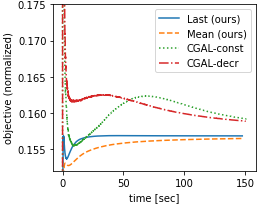}\par
    \includegraphics[width=\linewidth]{./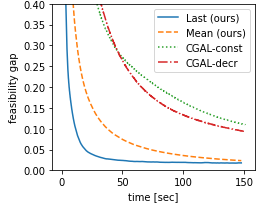}\par
    \includegraphics[width=\linewidth]{./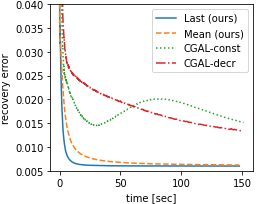}\par
\end{multicols}
\caption{Results for low-rank and sparse covariance estimation for $r=\hat{r}^*= 10$.}
\label{fig:r10}
\end{figure}

\begin{figure}[H]
\centering
\begin{multicols}{3}
    \includegraphics[width=\linewidth]{./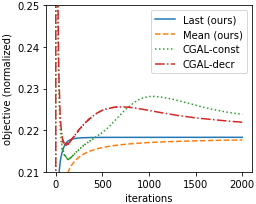}\par
    \includegraphics[width=\linewidth]{./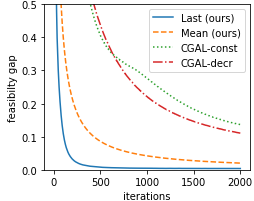}\par        
    \includegraphics[width=\linewidth]{./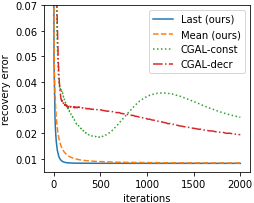}\par
\end{multicols}
\begin{multicols}{3}
    \includegraphics[width=\linewidth]{./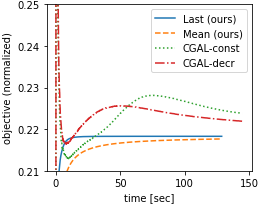}\par
    \includegraphics[width=\linewidth]{./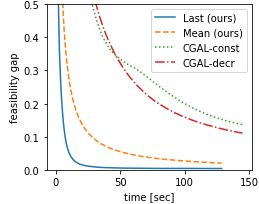}\par
    \includegraphics[width=\linewidth]{./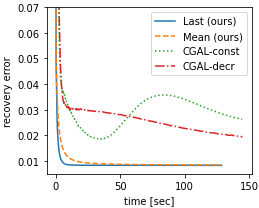}\par
\end{multicols}
\caption{Results for low-rank and sparse covariance estimation  for $r=\hat{r}^*= 20$.}
\label{fig:r20}
\end{figure}

\begin{figure}[H]
\centering
\begin{multicols}{3}
    \includegraphics[width=\linewidth]{./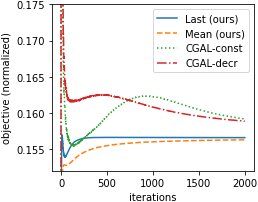}\par
    \includegraphics[width=\linewidth]{./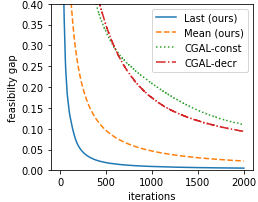}\par        
    \includegraphics[width=\linewidth]{./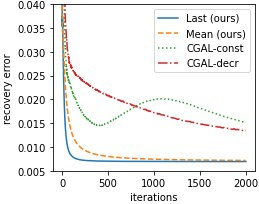}\par
\end{multicols}
\begin{multicols}{3}
    \includegraphics[width=\linewidth]{./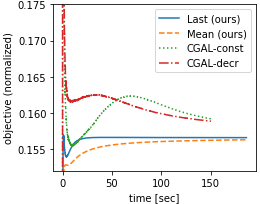}\par
    \includegraphics[width=\linewidth]{./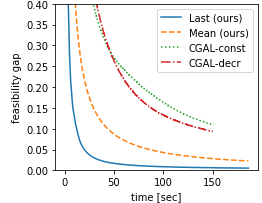}\par
    \includegraphics[width=\linewidth]{./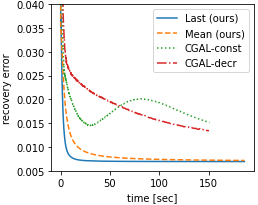}\par
\end{multicols}
\caption{Results for low-rank and sparse covariance estimation for $r= 10$ and  $\hat{r}^*=15$.}
\label{fig:r15}
\end{figure}

\begin{figure}[H]
\centering
\begin{multicols}{3}
    \includegraphics[width=\linewidth]{./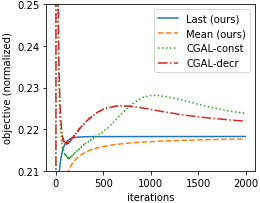}\par
    \includegraphics[width=\linewidth]{./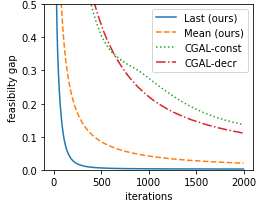}\par        
    \includegraphics[width=\linewidth]{./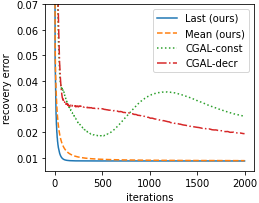}\par
\end{multicols}
\begin{multicols}{3}
    \includegraphics[width=\linewidth]{./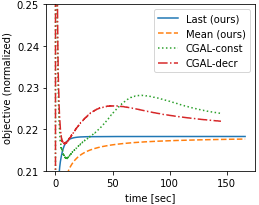}\par
    \includegraphics[width=\linewidth]{./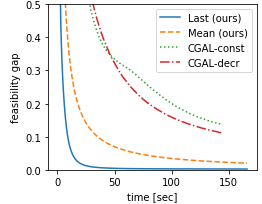}\par
    \includegraphics[width=\linewidth]{./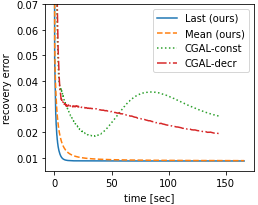}\par
\end{multicols}
\caption{Results for low-rank and sparse covariance estimation for $r= 20$ and  $\hat{r}^*=30$.}
\label{fig:r30}
\end{figure}

\subsection{Max Cut}
We now consider the following well-known semidefinite relaxation for the Max Cut problem:
\begin{equation}\tag{MC}\label{pblm:MC}
        \min_{\Ss}-\textrm{tr}(\CC\Ss) ~~ \textrm{s.t.}~~\textrm{ } \Ss\succeq0, \textrm{tr}(\Ss)=d,\textrm{diag}(\Ss)=\textbf{1},
\end{equation}
where $\CC$ is the Laplacian matrix of a combinatorial graph, $\textrm{diag}(\Ss)$ is the vector of elements on the main diagonal of the matrix $\Ss\in\mathbb{S}^d_+$, and \textbf{1} is a vector of ones of length $d$.

Note that since the objective function is linear, this problem does not necessarily satisfy our primal quadratic gap condition (Definition \ref{def:primal quadratic gap}). Still this does not revoke the applicability of our algorithm for this problem.

\paragraph*{Datasets.} We used the G1, G2 and G3 graphs from the Gset dataset\footnote{Y. Ye. Gset random graphs, found in: https://www.cise.ufl.edu/research/sparse/matrices/Gset/index.html}. These are graphs of size $800\times800$. The ranks of the optimal solutions for the max cut problem in these graphs are conveniently given in \cite{ding2021simplicity}, Table 1 (the ranks of the optimal solutions are $r^*=13$ for G1 and G2, and $r^*=14$ for G3).

\subsubsection{Implementation details}

\paragraph{Our algorithm.}
We set $\Rx$ to be an indicator of the spectrahedron $\s$ with $\tau=d$. We set $\A$ to be the identity operator, and set $\Ry$ to be the indicator function for the set of $d\times d$ matrices with a diagonal of ones. Hence on each iteration, the main computation of our algorithm is a single low-rank SVD. We also set $f(\x)\equiv f(\Ss):=-\textrm{tr}(\CC\Ss)$, for simplicity of computation. As in the CME experiments, we use line search for computing the primal step size on each iteration for this problem as well. In particular, we set the rank estimate for our algorithm, which determines the rank of the SVD computations, as $\hat{r}^* = r^*$.

\paragraph{The baseline algorithm.}
As in the implementation of our algorithm, we set $\A$ to be the identity operator, $\X$ to be the spectrahedron $\s$ with $\tau=d$, and $\Y$ to be the set of $d\times d$ matrices with diagonal of ones. We used the same objective function as in our algorithm. Hence, for the baseline algorithm, the only expensive computation is a rank-one SVD computation on each iteration.

\paragraph{SVD computations and rank overestimation.}
We used the same Python built-in function for thin SVD computations as in the previous experiment. Here we also set the tolerance parameter of the function to 0.01 for all algorithms. For the G1 graph we also considered overestimating the rank of the optimal solution for our algorithm, taking the SVD rank to be $\hat{r}^*=20$, where we know $r^*=13$ is the rank of the optimal solution.

\paragraph{Manual tuning.}
Since this problem does not satisfy the PQG property, the choice of $\eta$ becomes heuristic, which adds another free parameter to our algorithm.
Here we chose $\mu=0.2$ and $\eta=0.2$ arbitrarily, while the choice of $\rho=1$ for both variants of our algorithm, and $\rho_0=1$ for both variants of the baseline was done the same way as in the previous experiment, based on the performance over the graph G1 , where $\hat{r}^*=r^*=13$ (Figure \ref{fig:g1r13}). Here, we used the same parameters for all of four datasets (Figures \ref{fig:g1r13},\ref{fig:g2r13},\ref{fig:g3r14},\ref{fig:g1r20}).

\paragraph{Initialization.}
For both algorithms we initialized $\Ss$ to be the identity matrix, and the dual variable to be zero. In addition, for our algorithm, we initialized the additional variable $\y$ to be the identity matrix as well.

\subsubsection{Results}

We ran all algorithms for 2000 iterations. For each of the datasets we repeated the experiment 10 times and averaged the runtimes  for more reliable measurements. Here also, we initially set the SVD rank parameter for our algorithm to be exactly $r$, i.e., we set $\hat{r}^*=r$, but we also redid the experiment for G1 when overestimating the rank of the optimal solution for our algorithm, taking $\hat{r}^* = 20$ (while $r^*=13$). We plot the objective value and the feasibility gap measured by $\|\textrm{diag}(\Ss)-\mathbf{1}\|$ both w.r.t. number of iterations and runtime. The complete set of results is given in Figures \ref{fig:g1r13}, \ref{fig:g2r13}, \ref{fig:g3r14}, \ref{fig:g1r20}. We can see that when $\hat{r}^* = r$, both of our variants are faster than the baselines, for all of the datasets. When $\hat{r}^* > r$, while our "Mean" variant is slightly surpassed by the baseline w.r.t. the objective,  it converges faster to a feasible solution. Our "Last" variant performs better than the baseline w.r.t. both measures. 

\begin{figure}[H]
\centering
\begin{multicols}{2}
    \includegraphics[width=.65\linewidth]{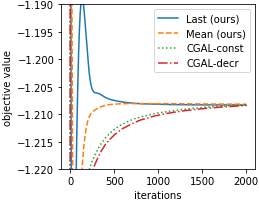}\par
    \includegraphics[width=.65\linewidth]{./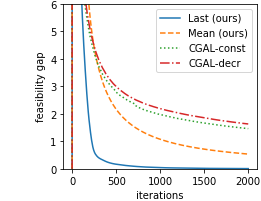}\par    
\end{multicols}
\begin{multicols}{2}
    \includegraphics[width=.65\linewidth]{./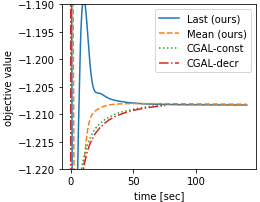}\par
    \includegraphics[width=.65\linewidth]{./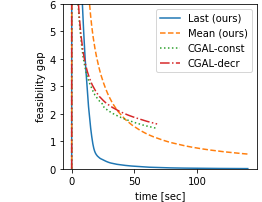}\par
\end{multicols}
\caption{Results for Max Cut with graph G1 ($r^*=\hat{r}^*=13$).}
\label{fig:g1r13}
\end{figure}

\begin{figure}[H]
\centering
\begin{multicols}{2}
    \includegraphics[width=.65\linewidth]{./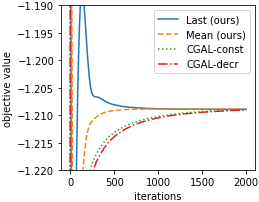}\par
    \includegraphics[width=.65\linewidth]{./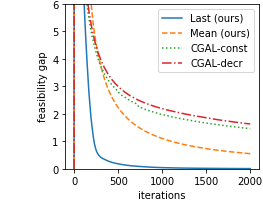}\par
\end{multicols}
\begin{multicols}{2}
    \includegraphics[width=.65\linewidth]{./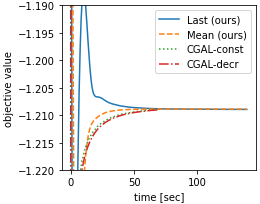}\par
    \includegraphics[width=.65\linewidth]{./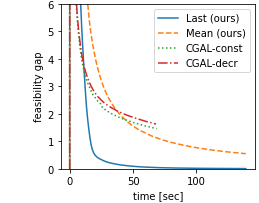}\par
\end{multicols}
\caption{Results for Max Cut with graph G2 ($r^*  = \hat{r}^*=13$).}
\label{fig:g2r13}
\end{figure}

\begin{figure}[H]
\centering
\begin{multicols}{2}
    \includegraphics[width=.65\linewidth]{./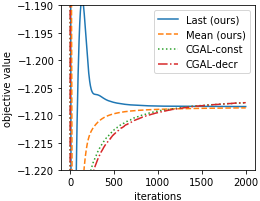}\par
    \includegraphics[width=.65\linewidth]{./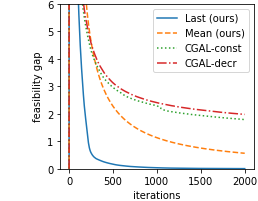}\par    
\end{multicols}
\begin{multicols}{2}
    \includegraphics[width=.65\linewidth]{./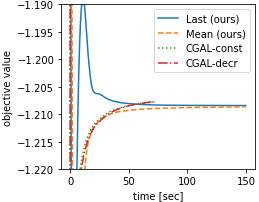}\par
    \includegraphics[width=.65\linewidth]{./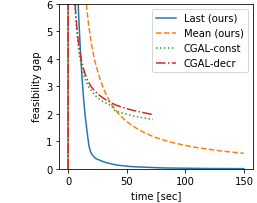}\par
\end{multicols}
\caption{Results for Max Cut with graph G3 ($r^* = \hat{r}^*=14$).}
\label{fig:g3r14}
\end{figure}

\begin{figure}[H]
\centering
\begin{multicols}{2}
    \includegraphics[width=.65\linewidth]{./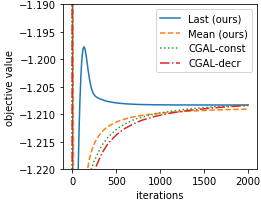}\par
    \includegraphics[width=.65\linewidth]{./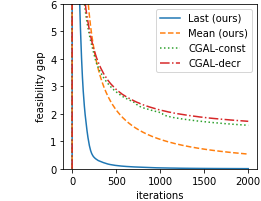}\par
\end{multicols}
\begin{multicols}{2}
    \includegraphics[width=.65\linewidth]{./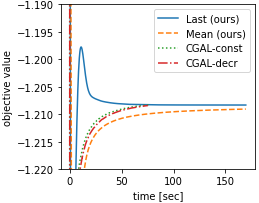}\par    
    \includegraphics[width=.65\linewidth]{./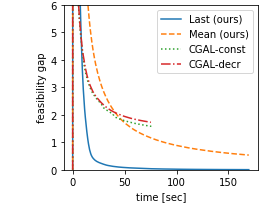}\par
\end{multicols}
\caption{Results for Max Cut with graph G1 ($r^*=13$) for  $\hat{r}^*=20$.}
\label{fig:g1r20}
\end{figure}

\section*{Acknowledgements}
We thank Atara Kaplan for important discussions  regarding  problems with the convergence proofs in \cite{gidel2018frank} (discussed in Section \ref{sec:issue}).

Dan Garber is supported by the ISRAEL SCIENCE FOUNDATION (grant No. 2267/22).

\bibliographystyle{plain}
\bibliography{bibliography}

\newpage
\appendix

\section{Issue with the Convergence Proofs in \cite{gidel2018frank}}\label{sec:issue}
In Section C of the paper (see pages 13-14), given a compact set $\X$ (formally defined in Subsection 2.2 on page 3), the authors define the following functions (equation numbers are as in the original paper):
\begin{align*}
    &\mathcal{L}:=f(\x)+\one(\x)+\langle \y,\MM\x\rangle+\frac{\lambda}{2}\|\MM\x\|^2,\tag{45}\\
    &d(\y):=\min_{\x\in\X}\mathcal{L}(\x,\y),\tag{46}\\
    &d^*=\max_\y d(\y).
\end{align*}
In addition, the authors denote the smoothness parameter of $f(\x)+\frac{\lambda}{2}\|\MM\x\|^2$ by $\Ll$ (assuming $f$ is smooth), the set of optimal dual solutions is denoted by $\Y^*$, the minimal distance between $\y$ and a point in $\Y^*$is denoted by dist$(\y,\Y^*)$, and the diameter of $\X$ is denoted by $D$ (defined as the maximum norm between two points in $\X$).

Through these notations, the authors state the following theorem, which is crucial for their convergence analysis.
\begin{theorem}[Theorem 1 in \cite{gidel2018frank}]
Under Slater's condition, there exists $\alpha>0$, such that for any dual variable $\y$, the following holds:
\begin{equation}\tag{92}
    d^*-d(\y)\geq\frac{1}{2\Ll D^2}\min\{\alpha^2\emph{dist}^2(\y,\Y^*),\alpha\Ll D^2\emph{dist}(\y,\Y^*)\}.
\end{equation}
\end{theorem}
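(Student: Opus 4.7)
The plan is to combine a ``curvature'' lower bound of the form $d^*-d(\y)\geq \|\nabla d(\y)\|^2/(2\,\Ll D^2)$ with a local linear error bound $\|\nabla d(\y)\|\geq\alpha\,\dist(\y,\Y^*)$ coming from Slater's condition. The two branches of the $\min$ then arise from a ``near''/``far'' dichotomy on $\dist(\y,\Y^*)$, with transition threshold $\Ll D^2/\alpha$.

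First I would record structural facts. Because $\tfrac{\lambda}{2}\|\MM\x\|^2$ is strictly convex in $\MM\x$, the quantity $\MM\x^*(\y)$ is single-valued for every $\x^*(\y)\in\argmin_{\x\in\X}\mathcal{L}(\x,\y)$; hence $d$ is concave and differentiable with $\nabla d(\y)=\MM\x^*(\y)$, and $\Y^*=\{\y:\MM\x^*(\y)=0\}$. Under Slater's condition, strong duality holds and $\Y^*$ is nonempty, closed, convex, and bounded. The curvature inequality $d^*-d(\y)\geq\|\nabla d(\y)\|^2/(2\,\Ll D^2)$ would then follow by combining the convexity of $f$, the first-order optimality of $\x^*(\y)$ as a minimizer of $\mathcal{L}(\cdot,\y)$ over $\X$, and the $\Ll$-smoothness of $\x\mapsto f(\x)+\tfrac{\lambda}{2}\|\MM\x\|^2$ controlled on segments of length at most $D$ inside $\X$; here $\Ll D^2$ plays the role of the Frank-Wolfe curvature constant of the inner problem, transferred to the dual gap.

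Second, I would establish the local linear error bound: under Slater's condition there exists $\alpha>0$ such that $\|\nabla d(\y)\|\geq\alpha\,\dist(\y,\Y^*)$ whenever $\dist(\y,\Y^*)\leq \Ll D^2/\alpha$. Formally, this is a Robinson-type metric subregularity statement for the dual optimality map, with modulus furnished by the Slater slack. Plugging this into the curvature inequality yields the quadratic branch $\alpha^2\dist^2(\y,\Y^*)/(2\Ll D^2)$ of the $\min$. In the complementary regime $\dist(\y,\Y^*)>\Ll D^2/\alpha$, I would interpolate: pick $\y^*:=P_{\Y^*}(\y)$ and $\y'\in[\y^*,\y]$ with $\dist(\y',\Y^*)=\Ll D^2/\alpha$, apply the near-regime bound at $\y'$ to obtain $d^*-d(\y')\geq \Ll D^2/2$, and transfer this to $\y$ via concavity of $d$ along $[\y^*,\y]$, yielding the linear branch $d^*-d(\y)\geq \alpha\,\dist(\y,\Y^*)/2$.

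The main obstacle is extracting an \emph{explicit}, uniform constant $\alpha>0$ directly from Slater's condition, with no further structural assumption on $f$ or $\X$ (such as strong convexity, polyhedrality, or a nondegeneracy condition at each dual optimum). In such generality, Slater's condition typically supplies only \emph{qualitative} metric regularity; upgrading it to a \emph{uniform quantitative} modulus over all of $\Y^*$ is precisely the step at which the argument of \cite{gidel2018frank} appears to fail, as detailed in Section \ref{sec:issue}.
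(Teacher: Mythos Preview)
The paper does not prove this statement at all; Section~\ref{sec:issue} is a \emph{critique} of the proof given in \cite{gidel2018frank}, identifying a concrete algebraic error. So the appropriate comparison is between where you locate the failure and where the paper locates it --- and these differ.

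You place the obstacle in your \emph{second} step, the error-bound part: you argue that Slater's condition supplies only qualitative metric regularity, and that upgrading this to a uniform quantitative modulus $\alpha>0$ is where \cite{gidel2018frank} breaks down. That is not the paper's diagnosis. The paper pinpoints the flaw in your \emph{first} step --- the ``curvature'' inequality --- specifically in the proof of Lemma~1 of \cite{gidel2018frank}. There the authors define $g_{\x}(\uu)=\fl(\uu+\x)-\fl(\x)-\langle \uu,\n\rangle$ with $\n\in\partial\fl(\x)$ and claim, via the Descent Lemma for the smooth part, that $g_{\x}(\uu)\le h_{\x}(\uu):=\tfrac{\Ll}{2}\|\uu\|^2+\one(\uu+\x)$. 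But since $\fl$ contains the indicator $\one$, the subgradient $\n$ splits as $\nabla(\text{smooth part})(\x)+\n'$ with $\n'\in N_{\X}(\x)$, and what the Descent Lemma actually yields is
\[
g_{\x}(\uu)\le h_{\x}(\uu)-\langle \uu,\n'\rangle.
\]
When $\uu+\x\in\X$, the normal-cone term $-\langle \uu,\n'\rangle$ is \emph{nonnegative} by definition and cannot be discarded. Carrying this extra term through the argument yields a different expression for $\alpha$, whose strict positivity is then no longer established.

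In your sketch, this is exactly the step you gloss over with ``would then follow by combining the convexity of $f$, the first-order optimality of $\x^*(\y)$ as a minimizer of $\mathcal{L}(\cdot,\y)$ over $\X$, and the $\Ll$-smoothness\ldots''. The first-order optimality over $\X$ brings in the normal-cone contribution, and that is precisely what obstructs the clean inequality you want. So the gap is not the abstract ``qualitative vs.\ quantitative regularity'' issue you flag at the end, but a concrete missing term in the smoothness/descent argument for the nonsmooth function $\fl$.
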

In order to prove this result, they define the function
\begin{equation}\tag{48}
    \fl(\x):=f(\x)+\one(\x)+\frac{\lambda}{2}\|\MM\x\|^2
\end{equation}
and at the beginning of the proof of Lemma 1 (on page 15), for any $\x\in\X$ and any given $\uu\in\R{m}$, the authors also define the function:
\begin{equation*}\tag{58}
    g_{\x}(\uu):=\fl(\uu+\x)-\fl(\x)-\langle\uu,\n\rangle,\quad\n\in\partial\fl(\x).
\end{equation*}
From the definition of $\fl$ we can equivalently write it as follows
\begin{equation*}
    (f+\frac{1}{2}\|\MM\cdot\|^2)(\uu+\x)-(f+\frac{1}{2}\|\MM\cdot\|^2)(\x)-\langle\uu,\nabla (f+\frac{1}{2}\|\MM\cdot\|^2)(\x)\rangle\\+\one(\uu+\x)-\one(\x)-\langle\uu,\n'\rangle,
\end{equation*}
where $n' \in \partial\one(\x) =N_c^{\X}(\x) :=\{\n'\in\R{m}|\langle \n',\x-\x'\rangle\geq0,\forall\x'\in\X\}$ is the normal cone of $\X$.
\\\\
Now, right after the definition of $g_{\x}(\uu)$ in Eq. (58), the authors define the following function
\begin{equation*}
    h_{\x}(\uu):=\frac{\Ll}{2}\|\uu\|^2+\one(\uu+\x),
\end{equation*}
and use the Descent  Lemma\footnote{Recall that in this case, the Descent Lemma reads as follows\\ $(f+\frac{1}{2}\|\MM\cdot\|^2)(\uu+\x)-(f+\frac{1}{2}\|\MM\cdot\|^2)(\x)-\langle\uu,\nabla (f+\frac{1}{2}\|\MM\cdot\|^2)(\x)\rangle\leq\frac{L_{\lambda}}{2}\|\uu\|^2$.} to claim that 
\begin{align}\label{eq:falseEq}
\forall\uu\in\R{m}: \quad g_{\x}(\uu)\leq h_{\x}(\uu).
\end{align}
However, we observe that using the Descent Lemma and the fact that $\x\in\X$, we obtain that
\begin{equation*}
    g_{\x}(\uu)\leq\frac{L_{\lambda}}{2}\|\uu\|^2+\one(\uu+\x)\color{red}-\langle\uu,\n'\rangle\color{black}
    = h_{\x}(\uu)\color{red}-\langle\uu,\n'\rangle\color{black}.
\end{equation*}
Now, if $\uu+\x\in\X$, then the rightmost term highlighted in red (including the minus sign) is non-negative by the definition of the normal cone (taking $\x'=\uu+\x$), and thus can't be omitted, and so the inequality \eqref{eq:falseEq} cannot be deduced.

Adding the missing term throughout the rest of the proof, we eventually get a more complicated inequality instead of (75) in the original paper. Thus, the value of the constant $\alpha$ in the theorem above, for which the positivity is proven (see (76) in \cite{gidel2018frank}), is irrelevant, and the fixed expression for $\alpha$ for which we can obtain the error bound property (see (92)), becomes even more complicated. It is then unclear that $\alpha$ can be proven to be strictly positive, which is required for the convergence results in \cite{gidel2018frank}.

\section{Smoothness and Primal Quadratic Gap Bounds for the Augmented Lagrangian}\label{sec:properties proofs}
\subsection{Proof of Lemma \ref{lemma:loose upper bound for K}(smoothness of $S$ w.r.t. the primal variable)}
\begin{proof}
    Since $f(\x)$ is $\beta$-smooth we obviously have that $S(\cdot,\w)$, for any fixed $\w \in \E_{2}$,  is $(\beta+\rho\|\K\|^2)$-smooth. Moreover, recalling the definition of the  spectral norm, we have    
    \begin{equation*}
        \begin{split}
            \|\K\|:=\max_{\|\q\|^2=1}\|\K\q\|&=\max_{\|\x\|^2+\|\y\|^2=1}\|\A \x-\y\|\\&\leq\max_{\|\x\|^2+\|\y\|^2=1}\|\A\x\|+\|\y\| \\&\leq\max_{\|\x\|^2=1}\|\A\x\| + \max_{\|\y\|^2=1}\|\y\|^2\\&=\|\A\|+1.
        \end{split}
    \end{equation*}
    Hence, the desired result follows.
\end{proof}

\subsection{Proof of Theorem \ref{thm:first case} (primal quadratic gap when $f$ is strongly convex)}

\begin{proof}
    Let us define $\phi_\w(\q):=S(\q,\w)$ for any $\q\in\E$ and any $\w\in\E_2$. We start the proof by presenting two properties of $\phi_\w$, that hold regardless of the strong convexity of $f$.
    \\
    First, from the differentiability of $f$, for any $\q=\cv{\x}{\y}\in\E$, we have that    
    \begin{equation}\label{eq:derivative}
        \nabla\phi_\w(\q)=\cv{\nabla f(\x)}{\zero}+\K^{\top}\w+\rho \K^{\top}\K\q.
    \end{equation}
    Second, from the definition of $\phi_\w$, we get, for any $\q_1=\cv{\x_1}{\y_1},\q_2=\cv{\x_2}{\y_2}\in\E$, that    
    \begin{align}
            \phi_\w(\q_2)-\phi_\w(\q_1)=&f(\x_2)-f(\x_1)+\langle\w,\K(\q_2-\q_1)\rangle+\frac{\rho}{2}\|\K\q_2\|^2-\frac{\rho}{2}\|\K\q_1\|^2 \nonumber\\
            =&f(\x_2)-f(\x_1)+\langle \K^{\top}\w,\q_2-\q_1\rangle+\rho\langle \K\q_1,\K(\q_2-\q_1)\rangle +\frac{\rho}{2}\|\K\q_2-\K\q_1\|^2 \nonumber\\
            =&f(\x_2)-f(\x_1)+\langle \K^{\top}\w+\rho\K^{\top}\K\q_1,\q_2-\q_1\rangle+\frac{\rho}{2}\|\K\q_2-\K\q_1\|^2. \label{eq:proof body}
    \end{align}
    Now, from the $\alpha$-strong convexity of $f(\x)$, we have, for any $\x_1,\x_2\in\E_1$, that    
    \begin{equation}\label{eq:scX}
        f(\x_2)-f(\x_1)\geq\langle \nabla f(\x_1),\x_2-\x_1\rangle+\frac{\alpha}{2}\|\x_2-\x_1\|^2.
    \end{equation}
    From \eqref{eq:derivative}, \eqref{eq:proof body} and \eqref{eq:scX} we further have    
    \begin{align}
            \phi_\w(\q_2)-\phi_\w(\q_1) &\geq\langle \nabla f(\x_1),\x_2-\x_1\rangle+\frac{\alpha}{2}\|\x_2-\x_1\|^2 +\frac{\rho}{2}\|\K\q_2-\K\q_1\|^2\\&+\langle \K^{\top}\w+\rho\K^{\top}\K\q_1,\q_2-\q_1\rangle. \nonumber\\
            &=\langle\nabla\phi_\w(\q_1),\q_2-\q_1\rangle+\frac{\alpha}{2}\|\x_2-\x_1\|^2+\frac{\rho}{2}\|\A\x_2-\A\x_1+\y_2-\y_1\|^2, \label{eq:body cont}
    \end{align}
    where the last equality also follows from the definition of $\K$.
    \\\\
    Now, since for every $a,b\in\R{}$, and $s>0$ it holds that
    $$(a+b)^2\geq(1-s)a^2+\frac{s-1}{s}b^2,$$
    we get that for every $s\geq1$    
    \begin{align}
            \|\A\x_2-\A\x_1+\y_2-\y_1\|^2&\geq(\|\A\x_2-\A\x_1\|-\|\y_2-\y_1\|)^2 \nonumber\\
            &\geq(1-s)\|\A\x_2-\A\x_1\|^2+\frac{s-1}{s}\|\y_2-\y_1\|^2 \nonumber\\
            &\geq\|\A\|^2(1-s)\|\x_2-\x_1\|^2+\frac{s-1}{s}\|\y_2-\y_1\|^2,\label{eq:s hack}
    \end{align}
    where the first inequality follows from the well-known Cauchy-Schwarz inequality.\\\\
    Combining \eqref{eq:body cont} and \eqref{eq:s hack}, and denoting $\delta(s):=\min\{\alpha+\rho\|\A\|^2(1-s),\rho(s-1)/s\},$ we get for every $s\geq1$, that    
    \begin{align*}
            \phi_\w(\q_2)-\phi_\w(\q_1)& \geq \langle \nabla\phi_\w(\q_1),\q_2-\q_1\rangle+\frac{1}{2}\left(\alpha+\rho\|\A\|^2(1-s)\right)\|\x_2-\x_1\|^2\\&+\frac{\rho(s-1)}{2s}\|\y_2-\y_1\|^2\\
            & \geq \langle \nabla\phi_\w(\q_1),\q_2-\q_1\rangle+\frac{\delta(s)}{2}(\|\x_2-\x_1\|^2+\|\y_2-\y_1\|^2)\\
            &= \langle \nabla\phi_\w(\q_1),\q_2-\q_1\rangle+\frac{\delta(s)}{2}\|\q_2-\q_1\|^2.
    \end{align*}
    The result now follows, since taking $\tilde{s}=1+\frac{\alpha}{2\rho\|\A\|^2}>1$, we have  
    \begin{equation*}\label{eq:aLp}
        \begin{split}
            \delta(\tilde{s}) &=\as =\min\left\{\frac{\alpha}{2},\frac{\alpha\rho}{\alpha+2\rho\|\A\|^2}\right\}>0,
        \end{split}
    \end{equation*}
    as required.
\end{proof}

\subsection{Proof of Theorem \ref{thm:second case} (primal quadratic gap when $\Rq$ is an indicator for a polytope)}\label{sec:pqg for polytopes}
The strong duality of a primal dual saddle point problem ensures that for every primal optimal solution there exists a dual optimal solution (and vice versa) such that the pair forms a saddle point. In the case of Problem \eqref{pblm:reformulation}, we can observe the following crucial property, which proves that the set of saddle points is the entire set $\Pp\times\D$, meaning that if strong duality holds, every couple consisting of an optimal solution of Problem \eqref{pblm:problem} and an optimal solution of the dual problem associated with \eqref{pblm:problem}, is a saddle point of $\Lp$.
\begin{proposition}\label{prp:saddle points set}
    Let $(\q_1^*,\w_1^*)$ and $(\q_2^*,\w_2^*)$ be two saddle points of $\Lp$. Then, $(\q_1^*,\w_2^*)$ is also a saddle point of $\Lp$.
\end{proposition}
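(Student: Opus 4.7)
The plan is to exploit the defining saddle-point inequalities of the two given pairs by cross-evaluating them at the ``mixed'' arguments, which will pin down all four values of $\Lp$ to a common number, and then assemble the desired saddle-point property of $(\q_1^*,\w_2^*)$ from the pieces we already have.

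First I would apply Assumption \ref{asmp:saddle} to $(\q_1^*,\w_1^*)$ at the test points $\q=\q_2^*$ and $\w=\w_2^*$, and symmetrically to $(\q_2^*,\w_2^*)$ at $\q=\q_1^*$ and $\w=\w_1^*$. This yields the two chains $\Lp(\q_1^*,\w_2^*)\leq\Lp(\q_1^*,\w_1^*)\leq\Lp(\q_2^*,\w_1^*)$ and $\Lp(\q_2^*,\w_1^*)\leq\Lp(\q_2^*,\w_2^*)\leq\Lp(\q_1^*,\w_2^*)$. Since both pairs are saddle points, they share the common optimal value $\Lp^*$, i.e., $\Lp(\q_1^*,\w_1^*)=\Lp(\q_2^*,\w_2^*)=\Lp^*$. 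Concatenating the two chains around this equality forces all four quantities $\Lp(\q_i^*,\w_j^*)$, $i,j\in\{1,2\}$, to equal $\Lp^*$; in particular $\Lp(\q_1^*,\w_2^*)=\Lp^*$.

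Next I would verify the two saddle-point inequalities for $(\q_1^*,\w_2^*)$ directly. For the left inequality, the saddle-point property of $(\q_1^*,\w_1^*)$ gives $\Lp(\q_1^*,\w)\leq\Lp(\q_1^*,\w_1^*)$ for every $\w\in\E_2$, and by the previous step $\Lp(\q_1^*,\w_1^*)=\Lp(\q_1^*,\w_2^*)$, so $\Lp(\q_1^*,\w)\leq\Lp(\q_1^*,\w_2^*)$. For the right inequality, the saddle-point property of $(\q_2^*,\w_2^*)$ gives $\Lp(\q_2^*,\w_2^*)\leq\Lp(\q,\w_2^*)$ for every $\q\in\E$, and again $\Lp(\q_2^*,\w_2^*)=\Lp(\q_1^*,\w_2^*)$, so $\Lp(\q_1^*,\w_2^*)\leq\Lp(\q,\w_2^*)$. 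Together these are exactly the saddle-point condition \eqref{eq:saddle point} for the pair $(\q_1^*,\w_2^*)$.

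There is no real obstacle here; the only subtlety is to be careful to use the common optimal value $\Lp^*$ to stitch the two separate saddle-point chains together, after which the two required inequalities follow by a one-line substitution each. The argument uses neither the specific form of $\Lp$ nor any smoothness/strong-convexity property — it is the standard ``rectangle'' structure of the set of saddle points of a convex-concave function.
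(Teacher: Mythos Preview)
Your proof is correct, and in fact it is the classical abstract argument showing that the set of saddle points of any function is a ``rectangle''. The paper takes a slightly different, more concrete route: it first observes that since $\q_1^*$ and $\q_2^*$ are primal optimal they are in particular feasible, so $\K\q_1^*=\K\q_2^*=\zero$, and therefore $\Lp(\q_i^*,\w)$ does not depend on $\w$ at all (the dual variable enters only through $\langle\w,\K\q\rangle$). This immediately gives $\Lp(\q_1^*,\w_1^*)=\Lp(\q_1^*,\w_2^*)$ and $\Lp(\q_2^*,\w_1^*)=\Lp(\q_2^*,\w_2^*)$, from which the two saddle-point inequalities for $(\q_1^*,\w_2^*)$ are assembled. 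Your argument is more general --- it uses nothing about the structure of $\Lp$ and would work for any convex-concave function --- whereas the paper's argument exploits the specific affine dependence of $\Lp$ on $\w$ to get the key equalities in one line. Note also that your chains already force $\Lp(\q_1^*,\w_1^*)=\Lp(\q_2^*,\w_2^*)$, so you need not invoke the common value $\Lp^*$ as a separate fact; it is a consequence of the concatenation.
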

\begin{proof}
    Since $\q_1^*$ and $\q_2^*$ are optimal solutions of Problem \eqref{pblm:problem}, they are in particular feasible solutions of Problem \eqref{pblm:problem} and thus satisfy $\K\q_1^*=\K\q_2^*=\zero$. Hence, recalling the definition of $\Lp$ we have that $\Lp(\q_1^*,\w_1^*)=\Lp(\q_1^*,\w_2^*)$ and that $\Lp(\q_2^*,\w_1^*)=\Lp(\q_2^*,\w_2^*)$.\\\\
    Therefore, applying the saddle point inequality \eqref{eq:saddle point} with $(\q_1^*,\w_1^*)$, we have for all $\w\in\E_2$, that    
    \begin{equation}\label{eq:left saddle}
        \Lp(\q_1^*,\w)\leq\Lp(\q_1^*,\w_1^*)=\Lp(\q_1^*,\w_2^*).
    \end{equation}
    In addition, applying \eqref{eq:saddle point} with $(\q_2^*,\w_2^*)$, we have for all $\q\in\E$, that    
    \begin{equation}\label{eq:right saddle}
        \Lp(\q,\w_2^*)\geq\Lp(\q_2^*,\w_2^*)=\Lp(\q_2^*,\w_1^*)\geq\Lp(\q_1^*,\w_1^*)=\Lp(\q_1^*,\w_2^*).
    \end{equation}
    Note that the second inequality is another application of \eqref{eq:saddle point} with the saddle point $(\q_1^*,\w_1^*)$.\\\\
    By \eqref{eq:left saddle} and \eqref{eq:right saddle} we get that $(\q_1^*,\w_2^*)$ is a saddle point.
\end{proof}

\begin{lemma}\label{lemma:equality w.r.t. B}
    Let $(\x^*_1,\y^*_1)\equiv\q^*_1$ and $(\x^*_2,\y^*_2)\equiv\q^*_2$ be two optimal solutions of Problem \eqref{pblm:problem}. Then, $\B\x^*_1=\B\x^*_2$.
\end{lemma}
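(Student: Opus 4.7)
The strategy is a direct contradiction argument leveraging the strong convexity of $g$ in the decomposition $f = g \circ \B$. Assume toward contradiction that $\B\x^*_1 \neq \B\x^*_2$, and consider the midpoint $\bar{\q} := \tfrac{1}{2}\q^*_1 + \tfrac{1}{2}\q^*_2 = (\bar{\x},\bar{\y})$ with $\bar{\x} = \tfrac{1}{2}(\x^*_1+\x^*_2)$, $\bar{\y} = \tfrac{1}{2}(\y^*_1+\y^*_2)$. The goal is to show $h(\bar{\q}) < h(\q^*_1)$, which will be impossible.

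First I would verify that $\bar{\q}$ is feasible for \eqref{pblm:problem}. The affine equality $\A\x = \y$ is preserved under convex combinations since $\A$ is linear, so $\A\bar{\x} = \bar{\y}$. Moreover, since $\Rx$ and $\Ry$ are convex (indicator of a polytope under Assumption \ref{asmp:polytope assumption}, in particular), their effective domains are convex sets, so $\bar{\q} \in \textrm{dom}(\Rq)$. Hence $\bar{\q}$ is a feasible point and $h(\bar{\q}) \geq h^* = h(\q^*_1) = h(\q^*_2)$.

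The main step is to produce a matching strict upper bound on $h(\bar{\q})$. By convexity of $\Rx$ and $\Ry$ we get
\begin{equation*}
\Rx(\bar{\x}) + \Ry(\bar{\y}) \leq \tfrac{1}{2}\bigl(\Rx(\x^*_1) + \Ry(\y^*_1)\bigr) + \tfrac{1}{2}\bigl(\Rx(\x^*_2) + \Ry(\y^*_2)\bigr).
\end{equation*}
For the smooth part, since $\B$ is linear $\B\bar{\x} = \tfrac{1}{2}(\B\x^*_1 + \B\x^*_2)$, so the $\alpha_g$-strong convexity of $g$ applied at this midpoint gives
\begin{equation*}
f(\bar{\x}) = g(\B\bar{\x}) \leq \tfrac{1}{2} f(\x^*_1) + \tfrac{1}{2} f(\x^*_2) - \tfrac{\alpha_g}{8}\|\B\x^*_1 - \B\x^*_2\|^2.
\end{equation*}
Adding the two inequalities yields $h(\bar{\q}) \leq \tfrac{1}{2}(h(\q^*_1)+h(\q^*_2)) - \tfrac{\alpha_g}{8}\|\B\x^*_1 - \B\x^*_2\|^2$. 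Using $h(\q^*_1) = h(\q^*_2) = h^*$ together with the standing assumption $\B\x^*_1 \neq \B\x^*_2$, the deficit is strictly positive, so $h(\bar{\q}) < h^*$, contradicting optimality.

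No real obstacle is anticipated: the argument is essentially a one-liner once the midpoint construction is set up. The only point worth highlighting is that the proof never uses the specific polytope structure beyond the convexity of $\Rq$, so it goes through in the full generality of Assumption \ref{asmp:polytope assumption} and in fact whenever the nonsmooth regularizers are convex.
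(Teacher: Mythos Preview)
Your proof is correct and uses the same core idea as the paper --- take the midpoint of the two optimal solutions and derive a strict improvement from the $\alpha_g$-strong convexity of $g$, contradicting optimality.

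The difference is in what function the contradiction is run against. The paper first fixes a dual optimum $\w^*\in\D$, invokes Proposition~\ref{prp:saddle points set} to get that both $(\q_1^*,\w^*)$ and $(\q_2^*,\w^*)$ are saddle points, and then shows $\Lp\bigl(\tfrac{1}{2}(\q_1^*+\q_2^*),\w^*\bigr)<\min_{\q}\Lp(\q,\w^*)$ by combining the strict midpoint inequality for $g$ with ordinary convexity of the remainder $\psi(\q)=\Rq(\q)+\langle\w^*,\K\q\rangle+\tfrac{\rho}{2}\|\K\q\|^2$. You instead work directly with the primal objective $h$ and the affine constraint: the midpoint is feasible because $\A$ is linear and $\textrm{dom}(\Rq)$ is convex, so $h(\bar{\q})<h^*$ is already the desired contradiction. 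Your route is more elementary --- it bypasses Assumptions~\ref{asmp:saddle} and~\ref{asmp:slater}, the dual variable, and Proposition~\ref{prp:saddle points set} entirely --- and, as you note, uses only the convexity of $\Rq$ rather than the specific polytope structure. The paper's detour through the augmented Lagrangian buys nothing extra for this particular lemma; it is presumably written that way for consistency with the surrounding saddle-point framework.
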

\begin{proof}
    Let $\w^*\in\cal{D}^*$. From the optimality of $\q^*_1,\q^*_2,\w^*$, and by Assumptions \ref{asmp:saddle} and \ref{asmp:slater}, as well as Proposition \ref{prp:saddle points set}, we have that both $(\q_1^*,\w^*)$ and $(\q_2^*,\w^*)$ are saddle points. By the saddle point property given in \eqref{eq:saddle point}, we have    
    \begin{equation}\label{eq:optimality w.r.t. w*}
        \Lp(\q^*_1,\w^*)=\Lp(\q^*_2,\w^*)=\min_{\q\in\E}\Lp(\q,\w^*).
    \end{equation}
    Assume on the contrary that $\B\x^*_1\neq\B\x^*_2$. Then, by the $\alpha_g$-strong convexity of $g$, we have    
    \begin{equation}\label{eq:g}
            g\left(\B\left(\frac{\x^*_1+\x^*_2}{2}\right)\right)\leq\frac{1}{2}(g(\B\x^*_1)+g(\B\x^*_2))-\frac{\alpha_g}{8}\|\B\x^*_1-\B\x^*_2\|^2<\frac{1}{2}(g(\B\x^*_1)+g(\B\x^*_2)).
    \end{equation}
    From the convexity of $\Rx$ and $\Ry$, we have that $\Rq(\q)$ is convex. Hence, the function $$\psi(\q):=\Rq(\q)+\langle \w^*,\K\q\rangle+\frac{\rho}{2}\|\K\q\|^2,$$ is convex in $\q$. Thus, from \eqref{eq:optimality w.r.t. w*} and \eqref{eq:g}, we have
    \begin{align*}
            \Lp\left(\frac{\q^*_1+\q^*_2}{2},\w^*\right) & = g\left(\B\left(\frac{\x^*_1+\x^*_2}{2}\right)\right)+\psi\left(\frac{\q^*_1+\q^*_2}{2}\right) \\
            & <\frac{1}{2}g(\B\x^*_1)+\frac{1}{2}g(\B\x^*_2)+\frac{1}{2}\psi(\q^*_1)+\frac{1}{2}\psi(\q^*_2)\\
            & = \frac{1}{2}\Lp(\q^*_1,\w^*)+\frac{1}{2}\Lp(\q^*_2,\w^*)\\
            &= \min_{\q\in\E}\Lp(\q,\w^*),
    \end{align*}
    which is a contradiction.
\end{proof}
The following Lemma is a known property of polytopes, which is stated without a proof.  For a proof we refer the reader to Lemma 4 in \cite{garber2019logarithmic}.
\begin{lemma}(Hoffman's Lemma)
    Let $\F := \{\q\in\E | \C\q \leq \bb\}$ be a compact and convex polytope and let $\T: \V_1\to\V_2$ be a linear mapping. Given some $\cc \in \V_2$, we define the set $\F(\T, \cc) := \{\q \in \F | \T\q = \cc\}$. If $\F(\T, \cc) \neq \emptyset$, then there exists $\sigma > 0$ such that for any $\q \in \cal{P}$ we have
    $$\emph{dist}(\q,\F(\T, \cc))\leq\sigma\|\T\q - \cc\|^2,$$
    where we define $\emph{dist}(\q,\F(\T, \cc)):=\min_{\z\in\F(\T, \cc)}\|\z-\q\|^2$.
\end{lemma}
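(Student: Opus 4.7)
My approach is to reduce the claim to the classical Hoffman error bound for systems of linear inequalities. The key observation is that $\F(\T,\cc)$ is itself a polyhedron cut out by a single finite system of linear inequalities: rewriting the equality $\T\q=\cc$ as the pair $\T\q\le\cc$ and $-\T\q\le -\cc$, we obtain
\[
\F(\T,\cc)=\{\q\in\E:\C\q\le\bb,\ \T\q\le\cc,\ -\T\q\le -\cc\}.
\]
The classical Hoffman bound then provides a constant $L>0$, depending only on the matrices $\C$ and $\T$ (and \emph{not} on the right-hand sides $\bb,\cc$), such that for every $\q\in\E$ the Euclidean distance from $\q$ to the above solution set is at most $L$ times the norm of the positive part of the stacked residual vector.

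Restricting attention to $\q\in\F$ kills the $\C\q\le\bb$ block of residuals (these are automatically non-positive), leaving only the residuals coming from $\T\q\le\cc$ and $-\T\q\le -\cc$. The combined Euclidean norm of the positive parts of these two blocks is at most $\sqrt{2}\,\|\T\q-\cc\|$. Using the definition of $\operatorname{dist}(\q,\F(\T,\cc))$ (which in the lemma's convention is the squared distance) and squaring both sides yields the conclusion with $\sigma:=2L^2$.

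The substantive work, then, is proving the classical Hoffman bound itself. I would do this via a compactness/contradiction argument: assume no such $L$ exists, extract a sequence $(\q_n,\cc_n)$ for which the ratio between distance and residual-norm diverges, and normalize so that the residual norm stays bounded. Because $(\C,\T)$ are fixed, only finitely many active-index patterns can arise at the projections $P_{\F(\T,\cc_n)}(\q_n)$, so one may pass to a subsequence along which this pattern is constant; continuity of the projection on a fixed-active-set affine piece then forces the limit residual to be zero while the limit distance stays positive, a contradiction. A constructive alternative proceeds via KKT: for fixed active set $I$, the vector $\q-P_{\F(\T,\cc)}(\q)$ lies in the cone spanned by the corresponding rows of $\C$ together with the range of $\T^\top$, and a Carathéodory-style extraction yields a multiplier bound depending only on submatrices of $(\C,\T)$. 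Taking the maximum over the finitely many patterns gives a uniform $L$.

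The main obstacle is precisely this uniformity of $L$ across \emph{all} admissible $\cc$: since the feasible set moves with $\cc$, we need the constant to depend only on the matrix data. Both routes above handle this by exploiting the finiteness of the combinatorial types of active-index patterns that can ever appear at a projection, and this is the crux on which every proof of Hoffman-type inequalities rests.
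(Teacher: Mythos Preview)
Your reduction is correct. Rewriting $\F(\T,\cc)$ as a single linear-inequality system, invoking the classical Hoffman bound with constant $L=L(\C,\T)$, observing that for $\q\in\F$ the $\C$-block of residuals vanishes, and then squaring (to match the lemma's convention $\operatorname{dist}(\cdot,\cdot)=\min\|\cdot\|^2$) gives exactly the stated inequality. One small sharpening: since for any vector $v$ one has $\|v_+\|^2+\|(-v)_+\|^2=\|v\|^2$, the combined residual norm equals $\|\T\q-\cc\|$ rather than merely being bounded by $\sqrt{2}\,\|\T\q-\cc\|$, so $\sigma=L^2$ already works. Also, note that in the lemma $\cc$ is fixed before $\sigma$ is chosen, so the uniformity-in-$\cc$ you carefully argue for is stronger than what is required here (though it is exactly what the classical Hoffman theorem delivers, and it justifies the paper's subsequent Remark that $\sigma$ depends only on $\C$ and $\T$).

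As for comparison with the paper: there is nothing to compare. The paper states this lemma without proof and simply refers the reader to Lemma~4 of \cite{garber2019logarithmic}. Your self-contained argument via the classical Hoffman inequality is the standard route and is entirely adequate.
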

\begin{remark}
    The value of $\sigma$ depends on $\C$ and $\T$.
\end{remark}
\begin{lemma}\label{lemma:alternative to sc of f for polytopes}
    There exists a constant $\sigma>0$, such that for any $\q\equiv(\x,\y)\in\F$, we have    
    \begin{equation}
        f(\x^*)-f(\x)\geq\langle\nabla f(\x),\x^*-\x\rangle+\frac{\alpha_g\sigma^{-1}}{2}\|\q^*-\q\|^2-\frac{\alpha_g}{2}\|\K\q\|^2,
    \end{equation}
    where $\q^*\equiv(\x^*,\y^*)\in\Pp$ is the projection of $\q$ onto $\Pp$.
\end{lemma}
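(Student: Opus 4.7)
The plan is to combine three ingredients: the strong convexity of $g$ transferred to $f$ via the chain rule, the constancy of $\B\x^*$ on the optimal set (Lemma \ref{lemma:equality w.r.t. B}), and Hoffman's Lemma applied to a polyhedral description of $\Pp$.

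First, since $f=g\circ\B$ with $g$ being $\alpha_g$-strongly convex, applying the standard strong-convexity inequality to $g$ at $\B\x$ and $\B\x^*$ and using $\nabla f(\x)=\B^{\top}\nabla g(\B\x)$ yields
\begin{equation*}
f(\x^*)-f(\x)\geq\langle\nabla f(\x),\x^*-\x\rangle+\frac{\alpha_g}{2}\|\B\x^*-\B\x\|^2.
\end{equation*}
So the task reduces to lower-bounding $\|\B\x^*-\B\x\|^2$ by a multiple of $\|\q^*-\q\|^2$, up to the slack term $\|\K\q\|^2$.

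Next, I would use Lemma \ref{lemma:equality w.r.t. B}, which guarantees the existence of a constant $\cc\in\E_3$ with $\B\x^{\prime}=\cc$ for every $\q^{\prime}=(\x^{\prime},\y^{\prime})\in\Pp$. Combined with feasibility $\K\q^{\prime}=\zero$ and the fact that any $\q\in\F$ satisfying these two linear conditions achieves the (constant) optimal value $g(\cc)$, we obtain the polyhedral representation
\begin{equation*}
\Pp=\{\q\in\F : \B\x=\cc,\ \K\q=\zero\}.
\end{equation*}
Define the linear mapping $\T:\E\to\E_3\times\E_2$ by $\T\q=(\B\x,\K\q)$. Then $\Pp=\F(\T,(\cc,\zero))$, and this set is non-empty by Assumptions \ref{asmp:saddle} and \ref{asmp:slater}.

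The main step is to invoke Hoffman's Lemma on the compact polytope $\F$ together with the linear mapping $\T$ and the target $(\cc,\zero)$: there exists $\sigma>0$ such that for every $\q\in\F$,
\begin{equation*}
\|\q^*-\q\|^2=\mathrm{dist}(\q,\Pp)\leq\sigma\|\T\q-(\cc,\zero)\|^2=\sigma\bigl(\|\B\x-\cc\|^2+\|\K\q\|^2\bigr),
\end{equation*}
where $\q^*$ is the projection of $\q$ onto $\Pp$. Rearranging and using $\B\x^*=\cc$ yields
\begin{equation*}
\|\B\x^*-\B\x\|^2\geq\sigma^{-1}\|\q^*-\q\|^2-\|\K\q\|^2.
\end{equation*}
Substituting this lower bound into the strong-convexity inequality above and multiplying by $\alpha_g/2$ delivers exactly the claimed inequality. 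The most delicate point is justifying the polyhedral description of $\Pp$, which is where Lemma \ref{lemma:equality w.r.t. B} is essential; once that is in place, Hoffman's Lemma finishes the argument.
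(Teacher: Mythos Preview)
Your proof is correct and follows essentially the same route as the paper's: both use Lemma \ref{lemma:equality w.r.t. B} to obtain the polyhedral description $\Pp=\{\q\in\F:\B\x=\cc,\ \K\q=\zero\}$, then apply Hoffman's Lemma to the linear map $\q\mapsto(\B\x,\K\q)$ to bound $\|\q^*-\q\|^2$, and finally combine with the strong-convexity inequality for $g$ via the chain rule. The only cosmetic difference is the order of presentation (you open with the strong-convexity step, the paper closes with it).
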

\begin{proof}
    From Lemma \ref{lemma:equality w.r.t. B}, it follows that there exists some $\bb^* \in \E_3$, such that for every optimal solution $\x^*$ of Problem \eqref{pblm:problem}, we have that $\B\x^*=\bb^*$.\\\\
    Now, let us denote $\F_{\bb^*}:=\{\q\in\F:\B\x=\bb^*,\K\q=\textbf{0}\}$. We will now show that $\F_{\bb^*}=\Pp$.\\\\
    Let $\q \in \Pp$, then obviously $\K\q = \textbf{0}$ and $\q \in \F$ (a feasible solution) and from Lemma \ref{lemma:equality w.r.t. B} it follows that $\B\x = \bb^*$. Therefore, $\Pp \subseteq \F_{\bb^*}$.\\\\
    On the other hand, any point $\q\in\F_{\bb^*}$ is feasible, as it satisfies $\K\q=\zero$. Moreover, by the choice of $\bb^*$, any $\q\equiv(\x,\y)\in\F_{\bb^*}$ satisfies $\B\x=\bb^*=\B\x^*$, for any optimal primal solution $\q^*\equiv(\x^*,\A\x^*)\in\Pp$. Now, since we also have that $\q\in\F$, the value of the objective of $\q$ satisfies, for any such point $\q^*$ $$g(\B\x)+\Rq(\q)=g(\bb^*)=g(\B\x^*)+\Rq(\q^*),$$ which means that the objective value of any $\q\in\F_{\bb^*}$ is optimal. Therefore, $\q\in\Pp$ and we have that $\F_{\bb^*}\subseteq\Pp$, and thus $\Pp=\F_{\bb^*}$.
    \\\\
    Let $0_{\E_2}:\E_2\to\E_2$ be the zero linear operator $0_{\E_2}(\y)=\zero$. By applying Hoffman's Lemma with $$\Tilde{\B}:=[\B,0_{\E_2}],\quad \T:=\cv{\Tilde{\B}}{\K},\quad \cc:=\cv{\bb^*}{\textbf{0}},$$ there exists a constant $\sigma>0$, such that for any $\q\in\F$, we have (notice that $\tilde{\B}\q = \B\x$)    
    \begin{equation*}
        \begin{split}
            \textrm{dist}(\q,\F_{\bb^*})&=\min_{\z\in\F_{\bb^*}}\|\z-\q\|^2\\&\leq\sigma\|\T\q-\cc\|^2\\&=\sigma\left\|\cv{\Tilde{\B}\q-\bb^*}{\K\q}\right\|^2\\&=\sigma(\|\Tilde{\B}\q-\bb^*\|^2+\|\K\q\|^2)\\&=\sigma(\|\B\x-\bb^*\|^2+\|\K\q\|^2).
        \end{split}
    \end{equation*}
    Thus, by denoting $\q^*:=\argmin_{\z\in\F_{\bb^*}}\|\z-\q\|^2$, we obtain that    
    \begin{equation}\label{eq:hoff rearrange}
        \|\B\x-\bb^*\|^2\geq\sigma^{-1}\|\q^*-\q\|^2-\|\K\q\|^2.
    \end{equation}
    Now, from \eqref{eq:hoff rearrange}, the $\alpha_g$-strong convexity of $g$, and the optimality of $\q^*=\cv{\x^*}{\y^*}$, we have    
    \begin{align*}
            f(\x^*)-f(\x)&=g(\B\x^*)-g(\B\x)\\
            &\geq\langle\x^*-\x,\B^{\top}\nabla g(\B\x)\rangle+\frac{\alpha_g}{2}\|\B\x^*-\B\x\|^2\\
            &=\langle\x^*-\x,\B^{\top}\nabla g(\B\x)\rangle+\frac{\alpha_g}{2}\|\bb^*-\B\x\|^2\\
            &\geq\langle\x^*-\x,\nabla f(\x)\rangle+\frac{\alpha_g\sigma^{-1}}{2}\|\q^*-\q\|^2-\frac{\alpha_g}{2}\|\K\q\|^2.
    \end{align*}
    This completes the desired result since $\Pp=\F_{\bb^*}$. 
\end{proof}
Applying Lemma \ref{lemma:alternative to sc of f for polytopes}, as well as the two properties of $\phi_\w(\q)$ presented in \eqref{eq:derivative} and \eqref{eq:proof body}, we can now prove Theorem \ref{thm:second case}.
\begin{proof}[Proof of Theorem \ref{thm:second case}]
    For any $\q\in\F$, we denote $\q^*=\argmin_{\z\in\Pp}\|\z-\q\|^2$. Recalling the beginning of the proof of Theorem \ref{thm:first case}, by plugging $\q_1=\q\equiv(\x,\y)$ and $\q_2=\q^*\equiv(\x^*,\y^*)$ in \eqref{eq:proof body}, we have (recall that $\K\q^* = \zero$)    
    \begin{equation*}\label{eq:body}
                \phi_\w(\q^*)-\phi_\w(\q) =f(\x^*)-f(\x)+\langle \K^{\top}\w+\rho\K^{\top}\K\q,\q^*-\q\rangle+\frac{\rho}{2}\|\K\q\|^2.
    \end{equation*}
    Combining it with Lemma \ref{lemma:alternative to sc of f for polytopes}, and recalling \eqref{eq:derivative}, we have
    \begin{align*}
            \phi_\w(\q^*)-\phi_\w(\q) &\geq\langle\nabla f(\x),\x^*-\x\rangle+\frac{\alpha_g\sigma^{-1}}{2}\|\q^*-\q\|^2-\frac{\alpha_g}{2}\|\K\q\|^2+\frac{\rho}{2}\|\K\q^*-\K\q\|^2\\&+\langle \K^{\top}\w+\rho\K^{\top}\K\q,\q^*-\q\rangle\\
            &=\langle\nabla\phi_\w(\q),\q^*-\q\rangle+\frac{\alpha_g\sigma^{-1}}{2}\|\q^*-\q\|^2+\frac{\rho-\alpha_g}{2}\|\K\q\|^2\\
            &\geq\langle\nabla\phi_\w(\q),\q^*-\q\rangle+\frac{\alpha_g\sigma^{-1}}{2}\|\q^*-\q\|^2,
    \end{align*}
    where the last inequality is true since $\rho\geq\alpha_g$, which proves the desired result with $\as=\alpha_g\sigma^{-1}$ (recalling $\phi_\w(\q)=S(\q,\w)$).
\end{proof}

\section{Proof of Theorem \ref{thm:polytope implementaion} (weak proximal oracle for polytopes)}\label{sec:polytope oracle proof}
\begin{proof}[Proof of Theorem \ref{thm:polytope implementaion}]
We present a proof that the point $\vv_\x$ as defined in the theorem indeed satisfies \eqref{eq:x wpo}. The proof that $\vv_\y$ satisfies \eqref{eq:y wpo} follows the exact same arguments with the obvious modifications. We build on an observation from \cite{garber2016linearly}, that given a point $\x\in\F$, that is formed by a convex combination of $t$ vertices of $\F$, a vector $\p\in\R{d}$, and a radius $r\in\R{}_+$, there exists a point $\Tilde{\x}\in\F$ such that:
    \begin{enumerate}
        \item\label{eq:linear} $\langle\Tilde{\x},\p\rangle\leq\langle\z,\p\rangle$ $\forall\z\in\F\cap B(\x,r)$,
        \item\label{eq:local}$\|\x-\Tilde{\x}\|\leq\omega\cdot r$,
        \item\label{convex hull}$\tilde{\x}$ is in the convex hull of the $t$ vertices needed to represent $\x$, and the vertex which is the output of the LMO of $\F$ w.r.t. $\p$,
    \end{enumerate}
    where $B(\x,r)$ is a ball of radius $r$ centered at $\x$ and $\omega\geq1$ is some constant that depends on the geometry of $\F$ (see \cite{garber2016linearly} Section 2).

    For any $\uu\in\F$, let $\tilde{\x}_{\uu}$ be a point satisfying the above three properties for $r=\|\uu-\x\|$ and $\p=\p_{\x}:=\nabla_\x S(\q,\w)+2\mu\A^{\top}\K\q$.
    Due to Property 3 above, $\Tilde{\x}_{\uu}$ can be written as a linear combination of the vertices $\z_1,\dots,\z_{t+1}$, i.e.,  $\Tilde{\x}_{\uu}=\MM\boldsymbol\gamma$, for some $\boldsymbol\gamma\in\R{t+1}$ in the simplex (recall that the columns of $\MM$ are $\z_1,\dots,\z_{t+1}$). Therefore, by the choice  $\vv_\x=\MM\gamma^*_\x$, where $\gamma^*_\x$ is a minimizer of \eqref{pblm:x simplex}, we have for any $\uu\in\F$, that
    \begin{equation}
        \begin{split}
            \langle\vv_\x,\p\rangle+\frac{\eta(\bs+2\mu\|\K\|^2)}{2}\|\vv_\x-\x\|^2\leq\langle\Tilde{\x}_{\uu},\p\rangle+\frac{\eta(\bs+2\mu\|\K\|^2)}{2}\|\Tilde{\x}_{\uu}-\x\|^2.
        \end{split}
    \end{equation}
    By combining Properties 1 and 2 above, we have for any $\uu\in\F$, that
    \begin{equation}
        \langle\Tilde{\x}_{\uu},\p\rangle+\frac{\eta(\bs+2\mu\|\K\|^2)}{2}\|\Tilde{\x}_{\uu}-\x\|^2\leq\langle\uu,\p\rangle+\frac{\omega^2\eta(\bs+2\mu\|\K\|^2)}{2}\|\uu-\x\|^2.
    \end{equation}
    Combining the last two inequalities, we have that \eqref{eq:x wpo} is indeed satisfied with $\Rx(\cdot)=\delta_\F(\cdot)$ and $\lambda_{\x}=\omega^2$ for any $\uu\in\F$, and in particular for any $\x^*\in\X^*\subseteq\F$.
\end{proof}

\end{document}